\newsavebox{\theorembox}
\newsavebox{\lemmabox}
\newsavebox{\corollarybox}
\newsavebox{\propositionbox}
\newsavebox{\examplebox}
\newsavebox{\conjecturebox}
\newsavebox{\algbox}
\newsavebox{\qbox}
\newsavebox{\problembox}
\newsavebox{\definitionbox}
\newsavebox{\assumptionbox}
\newsavebox{\hypothesisbox}
\savebox{\theorembox}{\noindent\bf Theorem}
\savebox{\lemmabox}{\noindent\bf Lemma}
\savebox{\corollarybox}{\noindent\bf Corollary}
\savebox{\propositionbox}{\noindent\bf Proposition}
\savebox{\examplebox}{\noindent\bf Example}
\savebox{\conjecturebox}{\noindent\bf Conjecture}
\savebox{\algbox}{\noindent\bf Algorithm}
\savebox{\qbox}{\noindent\bf Question}
\savebox{\definitionbox}{\noindent\bf Definition}
\savebox{\problembox}{\noindent\bf Problem}
\savebox{\assumptionbox}{\noindent\bf Assumption}
\savebox{\hypothesisbox}{\noindent\bf Hypothesis}
\newtheorem{theorem}{\usebox{\theorembox}}[section]
\newtheorem{lemma}{\usebox{\lemmabox}}[section]
\newtheorem{corollary}{\usebox{\corollarybox}}[section]
\newtheorem{proposition}{\usebox{\propositionbox}}[section]
\newtheorem{definition}{\usebox{\definitionbox}}
\newcommand{\qed}{\;\;\;\Box}
\newenvironment{proof}{\par{\bf Proof:}}{\(\qed\) \par}
\newcommand{\Bin}{\mathrm{Bin}}
\begin{document}

\title{Contagious Sets in Random Graphs}

\author{Uriel Feige\thanks{Department of Computer Science and Applied Mathematics, the Weizmann Institute, Rehovot, 7610001, Israel. {\tt
    uriel.feige@weizmann.ac.il}. Work supported in part by the Israel Science Foundation
   (grant  621/12), and by the I-CORE Program of the Planning and Budgeting Committee and the Israel Science Foundation (grant  4/11)}.
\and Michael Krivelevich\thanks{School of Mathematical Sciences, Raymond and Beverly Sackler Faculty of Exact Sciences, Tel Aviv University, 6997801, Israel. {\tt krivelev@post.tau.ac.il }.
    Research supported in part by: the USA-Israel BSF (grants 2010115, 2014361), and by the Israel Science Foundation (grant 912/12).}
        \and Daniel Reichman\thanks{Institute of Cognitive and Brain Sciences, University of California, Berkeley, CA. {\tt
    daniel.reichman@gmail.com}. Supported in part by the Israel Science Foundation (grant  621/12).}}

\maketitle

\begin{abstract}
We consider the following activation process in undirected graphs: a vertex is active either if it belongs to a set of initially activated vertices or if at some point it has at least $r$ active neighbors. A \emph{contagious set} is a set whose activation results with the entire graph being active. Given a graph $G$, let $m(G,r)$ be the minimal size of a contagious set.

We study this process on the binomial random graph $G:=G(n,p)$ with $p: = \frac{d}{n}$ and $1 \ll d \ll \left(\frac{n \log \log n}{\log^2 n}\right)^{\frac{r-1}{r}}$. Assuming $r > 1$ to be a constant that does not depend on $n$, we prove that $$m(G,r) = \Theta\left(\frac{n}{d^{\frac{r}{r-1}}\log d}\right),$$ with high probability.
We also show that the threshold probability for $m(G,r)=r$ to hold is $p^*=\Theta\left(\frac{1}{(n \log^{r-1} n)^{1/r}}\right)$.
\end{abstract}
\newpage

\section{Introduction}

In $r$-\emph{neighbor bootstrap percolation} we are given an undirected graph $G=(V,E)$ and an integer $r>1$. Every vertex is either \emph{active} or \emph{inactive}. A set of vertices composed entirely of active vertices is called active. Initially, a set of vertices $A_0$ is activated. These vertices are called \emph{seeds}. A contagious process evolves in discrete steps where for $i>0$,
$$A_i=A_{i-1}\cup \{v:|N(v)\cap A_{i-1}|\geq r\},$$
and $N(v)$ is the set of neighbors of $v$. In words, a vertex becomes active in a given step if it has at least $r$ active neighbors. We refer to $r$ throughout this paper as the \emph{threshold}. Set
$$\langle A_0 \rangle:=\bigcup_iA_i.$$
\begin{definition}
Given $G=(V,E)$ and a threshold $r$, a set $A_0\subseteq V$ is called \emph{contagious} if $\langle A_0 \rangle=V$. That is, activating $A_0$ results with the entire graph being activated.
The minimal cardinality of a contagious set in $G$ is denoted in $G$ by $m(G,r)$.
The \emph{number of generations} of a (not necessarily contagious) set $A_0$ which we denote by $\tau:=\tau(A_0)$ is the minimal integer such that
$\bigcup_{i \leq \tau}A_i=\langle A_0 \rangle$.
\end{definition}

Bootstrap percolation has been studied for a variety of graphs \cite{BB,Lattice,peres,BP,Janson,JanLuc}. Here we focus on the random graph $G(n,p)$ on $n$ labeled vertices, where every possible edge appears independently with probability $p$. Our interest is in providing both upper and lower bounds on the typical size of a contagious set of minimal cardinality. We remark that the term ``bootstrap percolation" is often used with respect to choosing vertices independently with some probability $q$ to the set of seeds. In contrast, in this work we do not restrict ourselves to the study of randomly generated contagious sets.

Studying the behavior of combinatorial quantities in $G(n,p)$ has a long and rich history \cite{Boll}, and has resulted in a plethora of ideas which have proven useful in other contexts as well.
In addition, there is much interest in studying computational problems on random graphs \cite{frieze}.
Furthermore, combinatorial and algorithmic ideas originating from the study of the model $G(n,p)$ of random graphs are often useful in the study of more general families of random graphs. Hence, beyond the intrinsic value of studying the value of $m(G,r)$ in $G(n,p)$ which we consider to be of interest of its own right, we believe the ideas in the current work may prove applicable in other
contexts where contagious processes are studied.

\subsection{Our results}

Consider $G(n,p)$, and let $p:=\frac{d}{n}$. We obtain a nearly tight characterization of the probable value of $m(G,r)$. We say an event in the probability space $G(n,p)$ occurs ``typically" or ``with high probability" (w.h.p.) if it occurs with probability $1-o(1)$, where $o(1)$ represents a term that tends to $0$ as $n$ tends to infinity. For two integer valued function $f(n),g(n)$, we say that $f(n) \ll g(n)$ if $\lim_{n\rightarrow \infty} \frac{f(n)}{g(n)}=0.$ 

\begin{theorem}
\label{thm:random}
Let $G\sim G(n,p)$ with $p: = \frac{d}{n}$ and $$1 \ll d \ll \left(\frac{n \log \log n}{\log^2 n}\right)^{\frac{r-1}{r}}.$$ 
Then with high probability
$$m(G,r)= \Theta\left(\frac{n}{d^{\frac{r}{r-1}}\log d}\right).$$
\end{theorem}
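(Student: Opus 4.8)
The plan is to prove $m(G,r)=O\!\big(n/(d^{r/(r-1)}\log d)\big)$ and $m(G,r)=\Omega\!\big(n/(d^{r/(r-1)}\log d)\big)$ separately. Write $a^\star:=n/d^{r/(r-1)}$; up to a constant this is the classical critical size for bootstrap percolation on $G(n,p)$ from a \emph{random} seed set, and the extra $\log d$ factor is the saving available to a cleverly chosen seed set. Both directions rely on an \emph{ignition lemma}: w.h.p.\ there is a constant $C_0$ such that every $A\subseteq V$ with $|A|\ge C_0a^\star$ has $\langle A\rangle\supseteq V\setminus R$, where $R$ is a fixed ``bad'' set (essentially the vertices of degree $<r$ together with tiny peripheral pieces) of size $o(a^\star/\log d)$. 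I would prove it by a union bound over sets $A$ in the relevant size range: for fixed $A$ with $|A|=a$, the number of vertices with $\ge r$ neighbours in $A$ is concentrated (Chernoff, the relevant indicators being independent over the vertex) around $n\binom{a}{r}p^r$, which for $a\ge C_0a^\star$ exceeds $(1+\delta)a$; so one generation multiplies the active set by a factor bounded away from $1$, uniformly in $A$, and iterating brings it to $(1-o(1))n$, after which all remaining vertices outside $R$ are swept in because they already have $\ge r$ active neighbours. Since any contagious set must contain $R$ and $|R|=o(a^\star/\log d)$, this reduces both bounds to the question of reaching size $C_0a^\star$.

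For the upper bound I would exhibit a seed set $S$ with $|S|=O(a^\star/\log d)$ and $|\langle S\rangle|\ge C_0a^\star$ (then adjoin $R$). The construction is iterative: maintain an active set $A$ and, in each phase, add a small batch of new seeds and run the process so as to roughly double $|A|$. The structural input is that whenever $|A|=a\le a^\star$ the ``almost-active'' set $B:=\{v\notin A:\,|N(v)\cap A|=r-1\}$ has size $\asymp n(ap)^{r-1}/(r-1)!$, which is far larger than $a$; furthermore activation \emph{within} $B$ behaves like ordinary ($r=1$) percolation, since any active neighbour of a $B$-vertex activates it. By seeding vertices chosen to reach many vertices of $B$ and letting the resulting cascade (through $B$ and on into the rest of $G$) run to completion, one can grow $A$ by a constant factor at a cost of only $\asymp a/\log d$ new seeds; summing the geometrically increasing phase costs up to $\Theta(a^\star)$ gives $|S|=O(a^\star/\log d)$. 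The delicate point is to pin down, with the correct dependence on $d$, that the amplification per unit of ``seed budget'' is of order $\log d$, which forces a careful, non-random choice of the seeds and of the phase lengths.

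For the lower bound it suffices, by the ignition lemma, to show that if $|S|<c\,a^\star/\log d$ then the process from $S$ stalls before $C_0a^\star$ vertices are active, i.e.\ $|\langle S\rangle|<C_0a^\star$ and hence $\langle S\rangle\ne V$. A direct first-moment bound over ``activation transcripts'' — orderings $v_1,\dots,v_L$ of $\langle S\rangle$ in which the seeds come first and each later $v_i$ has $\ge r$ earlier neighbours, which must exist whenever $|\langle S\rangle|\ge L$ — is too weak here: it only rules out $L$ up to about $n^{(r-1)/r}/d\ll a^\star$, because the union bound over the $\binom nL$ choices of the set wastes the facts that the infected set stays local near the seeds and that the still-uninfected vertices are atypical. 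So instead I would run the percolation generation by generation, exposing edges as needed, and track not only $A_i$ but the complement $V\setminus A_i$: a vertex survives step $i$ precisely because it has $<r$ neighbours in $A_{i-1}$, so the uninfected vertices are anti-selected to have atypically few edges into $A_i$, and the true per-generation increment is therefore strictly below the naive estimate $n\binom{|A_i|}{r}p^r$. Carrying this correction through the whole subcritical phase, uniformly over the choice of $S$, should show that the active set cannot escape the range $[\,|S|,\,C_0a^\star)$ once $|S|<c\,a^\star/\log d$. I expect this final step — a tight worst-case-over-$S$ analysis of the subcritical regime that yields the constant $\log d$ rather than $\log n$ — to be the main obstacle; by comparison the ignition lemma and the bookkeeping around $a^\star$ are routine.
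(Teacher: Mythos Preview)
Your upper-bound plan is essentially the paper's: grow the active set in phases by exploiting that the ``almost-active'' set $B=\{v:|N(v)\cap A|=r-1\}$ percolates with threshold~$1$, so one seed per large connected component of $G[B]$ activates the whole component. The paper runs $\ell=\log\log d$ doubling phases with component-size targets $s_i\asymp\log d/(\ell-i+1)$, so that the total seed cost $\sum_i|C_i|/s_i$ is $O(a^\star/\log d)$; it then invokes the Janson--{\L}uczak--Turova--Vallier theorem (your ignition step, but only for the one \emph{fixed} set produced by the construction, not uniformly over all $A$---a uniform version via union bound would need $C_0\gtrsim(\log d)^{1/(r-1)}$ to beat the $\binom{n}{a}$ factor, which is an unnecessary complication).

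The real divergence is the lower bound, and there you have talked yourself out of the right argument. You computed the first-moment bound for \emph{ordered} activation transcripts, which indeed stalls at $L\sim n^{(r-1)/r}/d\ll a^\star$. But the paper uses an \emph{unordered} density bound: if a seed set of size $t_0$ activates at least $t$ vertices, then the first $t$ active vertices span at least $r(t-t_0)$ edges (each non-seed contributes $r$ back-edges at its activation time). Taking $t=n/(3d^{r/(r-1)})\asymp a^\star$ and $t_0=(r-1)t/(r\log d)$, this is a $t$-set with $\beta t$ edges where $\beta=r-(r-1)/\log d$; the expected number of such sets is at most
\[
\binom{n}{t}\binom{\binom{t}{2}}{\beta t}p^{\beta t}
\le\left(\frac{e^{\beta+1}\,d^{(r-\beta)/(r-1)}}{\alpha^{\beta-1}(2\beta)^{\beta}}\right)^{t},
\]
and since $d^{(r-\beta)/(r-1)}=d^{1/\log d}=2$ this is $o(1)$ for $\alpha=3$. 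No generation-by-generation exposure, no anti-selection, no worst-case-over-$S$ subcritical analysis: the $\log d$ factor drops out directly from setting $\beta$ just below $r$ and $t\asymp a^\star$. What you flagged as ``the main obstacle'' is in fact a two-line first-moment computation once you count edges in the infected \emph{set} rather than along an ordering.
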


The upper bound in Theorem~\ref{thm:random} is constructive in the sense that it is derived by analyzing a polynomial time algorithm that typically finds a contagious set of size at most $O\left(\frac{n}{d^{\frac{r}{r-1}}\log d}\right)$.

Clearly it is always the case that $m(G,r) \geq r$. We examine how large $p$ needs to be in order for $G(n,p)$ to satisfy that typically $m(G,r)=r$.  The property of having a contagious set of size $r$ is a monotone property, hence it has a sharp threshold \cite{Bollo3}. We determine this threshold up to constant multiplicative factors:

\begin{theorem}
\label{thm:threshold}
Let $G\sim G(n,p)$ and suppose $r \geq 2$ is an integer. There exist $0<c<C$, such that the following holds: if $p<\frac{c}{(n \log^{r-1} n)^{1/r}}$, then with high probability no set of size $r$ is contagious.  If $p>\frac{C}{(n \log^{r-1} n)^{1/r}}$,
then with high probability there are contagious sets of size $r$. Moreover, 
with high probability there is a choice of a contagious set $B_0$ of size $r$ for which $\tau(B_0)=O(\log \log n)$. This upper bound on $\tau(B_0)$ is best possible up to constant factors -- with high probability there is no contagious set $B$ of size $r$ with $\tau(B)=o(\log \log n)$, as long as $p = o(n^{-1/r})$.
\end{theorem}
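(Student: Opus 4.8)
The plan is to treat the four assertions separately, with a common technical core: an estimate for how fast the closure of a small set can grow in $G(n,p)$. The key (deterministic-looking) identity is that, conditioning on the generations $A_0\subseteq A_1\subseteq\cdots\subseteq A_i$, each $v\notin A_i$ is known to have fewer than $r$ neighbours in $A_{i-1}$, so by a Vandermonde computation the expected number of vertices joining at the next step is $\approx np^r\bigl(\binom{|A_i|}{r}-\binom{|A_{i-1}|}{r}\bigr)$; when the jumps are small this is dominated by the ``one new, $r-1$ old'' term $np^r\,|A_i\setminus A_{i-1}|\binom{|A_{i-1}|}{r-1}$, so the per-step increment essentially multiplies by $(|A_i|/a^\ast)^{r-1}$, where $a^\ast:=(r!/np^r)^{1/(r-1)}=\Theta(\log n)$ when $p=\Theta(p^\ast)$. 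Thus the closure grows sub-geometrically below the critical size $a^\ast$ and explodes above it; this one picture drives all three estimates.

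For the lower bound $\tau(B)=\Omega(\log\log n)$: since $p=o(n^{-1/r})$ we have $np^r=o(1)$, so for any $T$ with $|T|=a$ the expected number of vertices having $\ge r$ neighbours in $T$ is $o(a^r)$. I would show that whp, simultaneously over all $T$, this count is at most $a^r$ — for $a\gtrsim(\log n)^{2/(r-1)}$ the expectation exceeds $a\log n$, so a Chernoff bound plus a union bound over the $\le n^a$ sets $T$ suffices, while for smaller $a$ a direct first-moment bound on pairs $(T,W)$ with every vertex of $W$ having $\ge r$ neighbours in $T$ gives a count that is $O(a)$. Hence whp $|A_{i+1}|\le 3|A_i|^r$ at every closure step from every starting set, so $|A_\tau|$ is at most a tower in $\tau$; a contagious $r$-set has $|A_0|=r$ and $|A_\tau|=n$, forcing $(r+1)^\tau\gtrsim\log n$, i.e.\ $\tau=\Omega(\log\log n)$.

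For non-contagion when $p<c\,p^\ast$: I would first establish an ``all-or-nothing'' dichotomy. Since $d:=np\to\infty$ (indeed $d\gg\log n$), whp $G$ contains no set $U$ with $d/3\le|U|\le n/100$ in which every vertex has $\ge d/3$ neighbours, by a first-moment bound on the number of edges such a $U$ would have to span. If an $r$-set $S$ is not contagious, then $U:=V\setminus\langle S\rangle\neq\emptyset$ and every vertex of $U$ has fewer than $r<d/3$ neighbours outside $U$, hence more than $d/3$ inside, so $|U|>n/100$ and $|\langle S\rangle|\le 99n/100$. It then suffices to show whp $\max_S|\langle S\rangle|<n/100$, which I would do by bounding the expected number of ``witness towers'' $(S,u_1,\dots,u_\ell)$ in which each $u_j$ has $\ge r$ neighbours among $S\cup\{u_1,\dots,u_{j-1}\}$. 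The obstacle on this side is that the naive count of parent-choices, $\prod_j\binom{r+j-1}{r}$, is too wasteful precisely around $\ell\asymp\log n/\log\log n$ — which is the true order of $\max_S|\langle S\rangle|$ — so one must instead organise the count via the sub-geometric growth above, charging the newly activated vertices against the (tiny) previous increment rather than the whole active set, and conclude that whp every $r$-set has closure $O(\log n/\log\log n)\ll n$.

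For existence of a contagious $r$-set with $\tau=O(\log\log n)$ when $p>C\,p^\ast$: the plan is to exhibit a short certificate. A second-moment argument shows that whp $G$ contains a copy of $K_{r,t}$ with $t=\Theta(\log n/\log\log n)$ — the largest complete bipartite subgraph with a bounded side — so some $r$-set has a first generation of that size. The step I expect to be hardest, because it must be pushed all the way to the critical scale, is to show that (taking $C$ large and exploiting the abundance of such $r$-sets) one can select $S$ whose closure climbs from $r$ up to $a^\ast=\Theta(\log n)$ in only $O(\log\log n)$ generations; once past $a^\ast$ the increment identity gives $|A_{i+1}|\ge|A_i|\bigl(1+(|A_i|/a^\ast)^{r-1}\bigr)$, so the closure at least doubles and then squares, reaching $n/100$ in $O(\log\log n)$ further steps, at which point the dichotomy of the previous paragraph forces $\langle S\rangle=V$. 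Combined with the $\Omega(\log\log n)$ bound this yields $\tau(B_0)=\Theta(\log\log n)$ in the overlap regime $C\,p^\ast<p=o(n^{-1/r})$, completing the ``moreover'' clause.
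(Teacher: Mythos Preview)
Your plan organises everything around per-step growth near the critical size $a^\ast$, which is a reasonable heuristic but does not close; the paper uses simpler structural arguments together with Theorem~\ref{thm:Janson} as a black box.

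For non-contagion below the threshold, the paper bypasses your witness-tower obstacle entirely: it never tracks generations. If an $r$-set is contagious then for every $t$ the seeds together with the first $t$ infected vertices form a set of size $t+r$ spanning at least $rt$ edges; a one-line first-moment bound on such subgraphs, evaluated at $t=\log n$, gives probability at most $n^r(c')^{\log n}=o(1)$ for small $c$. Dropping the ordering is exactly what makes the count go through, so your ``charging against the previous increment'' is not needed.

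For existence above the threshold, the gap you flag as hardest is real and your sketch does not fill it: a second-moment argument locates \emph{some} $K_{r,t}$ but gives no reason that particular $r$-set keeps growing to $a^\ast$, and ``abundance'' alone does not select a good copy. The paper proceeds quite differently. Write $G=G_1\cup G_2$; in $G_1$ run $\Theta(n/\log n)$ \emph{disjoint} trials, each starting from a fresh $r$-set and greedily appending one vertex at a time with $\ge r$ neighbours among those already chosen, searching in disjoint vertex reservoirs so the steps are independent. A Stirling computation shows each trial reaches size $k=c_1\log n$ with probability at least $n^{-1/3}$, so whp one succeeds; then Theorem~\ref{thm:Janson} in $G_2$ infects everything. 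The $O(\log\log n)$ generation bound for the first phase comes from bounding the longest directed path in the random ``each vertex points to $r$ random earlier vertices'' DAG on $k$ nodes, which is $O(\log k)$.

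For the lower bound on $\tau$, your uniform claim $|A_{i+1}|\le 3|A_i|^r$ is internally inconsistent with your own observation that some $r$-sets have $\Theta(\log n/\log\log n)\gg r^r$ common neighbours, so it fails already at $i=0$; the ``$O(a)$'' bound for small $a$ cannot hold for all $T$. The paper sidesteps small $a$ with a one-line trick: any contagious $r$-set sits inside a contagious set of size $\log n$ (adding seeds can only decrease $\tau$), and for $k\ge\log n$ a Chernoff-plus-union bound shows every $k$-set infects fewer than $k^2$ vertices in one round, forcing $\tau\ge\log\log n-\log\log\log n$.
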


\subsection{Related work}

Bootstrap percolation was introduced by Chalupa, Leath and Reich \cite{Chal}, motivated by applications in statistical physics. Other early works include \cite{Aizenman,Enter}. Initially, the study of bootstrap percolation  focused mostly on lattices and grids. More recently, it has been studied on other families of graphs such as random $d$-regular graphs \cite{BP,Janson}, hypercubes \cite{BB} and several models of random graphs with a given degree sequence (e.g., \cite{Amini,AF}). These works studied the case in which the set of seeds is selected independently at random. The smallest contagious set (the value of $m(G,2)$) was studied for some  families of graphs such as hypercubes \cite{BB} and grids \cite{square}.

The critical size of a \emph{random} set needed for full activation (with high probability) of the binomial random graph $G(n,p)$ was first studied in \cite{Vallier}. The results in \cite{Vallier} were generalized and extended by \cite{JanLuc} (using ideas from \cite{Tomba}),
where the critical size of a random set required for complete activation of $G(n,p)$ for arbitrary constant threshold $r$  is determined in great detail of precision. We shall apply the following theorem from \cite{JanLuc} (which follows from Theorem 3.1, page 1996, and Theorem 3.10, page 2000, in \cite{JanLuc}).

\begin{theorem}\label{thm:Janson}
Let $r \geq 2$ be a fixed integer independent of $n$. Suppose $G\sim G(n,p)$ with $n^{-1}\ll p \ll n^{-1/r}$. Let $$a_c:=\left(1-\frac{1}{r}\right)\cdot \left(\frac{(r-1)!}{np^r}\right)^{1/(r-1)}.$$ Suppose that $A$ is a fixed set of vertices that are activated as seeds. Then for every fixed $\delta>0$, with high probability the following holds:

\begin{enumerate}
\item If $|A| = (1+\delta)a_c$ then at least $n-O(n(pn)^{r-1}e^{-pn})$ vertices will be infected. Furthermore, $\tau(A)=\frac{\ln \ln(np)}{\ln r}+\frac{\ln n}{np}+O(1)$.

\item If $|A| \le (1-\delta)a_c$ then at most $2\left(\frac{(r-1)!}{np^r}\right)^{1/(r-1)}$ vertices will be infected. 

\end{enumerate}

\end{theorem}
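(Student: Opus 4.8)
The plan is to prove Theorem~\ref{thm:Janson} from scratch as a self-contained statement about $G(n,p)$, using a sequential-exposure reformulation together with concentration, rather than treating it as a black box. Fix the seed set $A$ with $|A|=a$ and imagine processing active vertices one at a time, each time revealing that vertex's edges to the currently inactive vertices. For each non-seed vertex $v$ I would introduce an independent sequence of independent $\Bin(1,p)$ coin flips recording whether $v$ is adjacent to the first, second, $\ldots$ processed active vertex, and set $\tau_v$ to be the first index at which $v$ has accumulated $r$ adjacencies; thus the $\tau_v$ are independent across $v$ with marginal $\Pr[\tau_v\le t]=\Pr[\Bin(t,p)\ge r]$. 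The structural lemma to establish, in the spirit of Scalia-Tomba, is that because exposed edges are never reused, the final number of active vertices equals the first $t$ at which the counting function $R(t):=a+\#\{v\text{ non-seed}:\tau_v\le t\}$ satisfies $R(t)=t$ (and $R(t)>t$ for all smaller $t$, meaning active vertices remain to be processed). For each fixed $t$, $R(t)$ is a sum of $n-a$ independent indicators, so a Chernoff bound places it within a $(1\pm o(1))$ factor of its mean $\mu(t):=a+(n-a)\Pr[\Bin(t,p)\ge r]$ with probability $1-e^{-\Omega(\mu(t))}$; since $R$ is nondecreasing I would control it on a fine net of $t$-values and interpolate, obtaining the uniform high-probability approximation $R(t)\approx\mu(t)$ over $a\le t\le n$. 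In the range $tp\to 0$ one has $\mu(t)\approx a+n\frac{(tp)^r}{r!}$.

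It then remains to analyze the deterministic function $h(t):=\mu(t)-t\approx a+n\frac{(tp)^r}{r!}-t$, and this is the heart of the matter. A direct computation shows $h$ is decreasing on $[0,t_*]$ and minimized at $t_*=\bigl(\frac{(r-1)!}{np^r}\bigr)^{1/(r-1)}$, where $n\frac{(t_*p)^r}{r!}=t_*/r$, so that $h(t_*)\approx a-\frac{r-1}{r}t_*=a-a_c$. If $a\le(1-\delta)a_c$ then $h(t_*)\le-\delta a_c<0$, so $h$ (with $h(0)=a>0$) crosses zero at a unique point $t<t_*$; this is the first fixed point of $R$, so the process halts with fewer than $t_*<2\bigl(\frac{(r-1)!}{np^r}\bigr)^{1/(r-1)}$ vertices active, which is part~2. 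If instead $a=(1+\delta)a_c$ then $h$ stays bounded below by $\tfrac{\delta}{2}a_c>0$ throughout a neighborhood of $t_*$, so $R(t)>t$ persists past the bottleneck; once $t$ exceeds $t_*$ the $t^r$ term dominates, $h$ increases rapidly, and the process runs until $t$ is within $o(n)$ of $n$. The vertices that never activate are exactly those with fewer than $r$ active neighbors once the active set is nearly all of $V$, and their expected number is $n\Pr[\Bin(n,p)<r]=\Theta\bigl(n(np)^{r-1}e^{-np}\bigr)$, matching the stated $O\bigl(n(pn)^{r-1}e^{-pn}\bigr)$ and completing part~1.

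For the generation count $\tau(A)$ I would switch to the synchronous description and track the active fraction. Writing $s_t=|A_t|$ and $y_t=s_tp$, in the early supercritical phase $s_{t+1}\approx n\frac{(s_tp)^r}{r!}$ gives $y_{t+1}\approx\frac{np}{r!}y_t^{\,r}$; setting $u_t=-\ln y_t$ turns this into the affine recursion $u_{t+1}=ru_t-\ln\frac{np}{r!}$ with repelling fixed point $u^*=\frac{1}{r-1}\ln\frac{np}{r!}$, so the deviation $u_t-u^*$ is multiplied by $r$ each step. Starting just above the critical scale $y\asymp(np)^{-1/(r-1)}$ with a constant-order gap, the active fraction reaches $\Theta(1)$ after $\frac{\ln\ln(np)}{\ln r}+O(1)$ generations. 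A further $O(1)$ generations bring the active fraction to $1-o(1)$, and the final ``mop-up'' of the last stragglers---vertices that must wait for their last few neighbors to activate as the uninfected fraction decays---contributes the additive $\frac{\ln n}{np}$ term, yielding $\tau(A)=\frac{\ln\ln(np)}{\ln r}+\frac{\ln n}{np}+O(1)$.

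I expect the main obstacle to be the \emph{near-critical window}. Making the sign analysis of $h$ rigorous requires the gap $|h(t_*)|\asymp\delta a_c$ to dominate the fluctuations of $R(t_*)$, which are of order $\sqrt{\mu(t_*)}\asymp\sqrt{t_*}$; this is precisely where the hypotheses $n^{-1}\ll p\ll n^{-1/r}$ enter. The bound $p\ll n^{-1/r}$ forces $np^r\to 0$, hence $a_c\asymp t_*\to\infty$ and $\delta a_c\gg\sqrt{t_*}$, so the concentration net resolves the outcome at the bottleneck; the bound $p\gg n^{-1}$ forces $np\to\infty$, which underlies both the complete-infection estimate and the time analysis. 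A secondary difficulty is the precise two-term evaluation of $\tau(A)$, in particular isolating the $\frac{\ln n}{np}$ mop-up term, which requires a careful tail analysis of the number of vertices still lacking $r$ active neighbors as the active fraction approaches $1$.
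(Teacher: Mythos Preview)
The paper does not prove Theorem~\ref{thm:Janson}; it is quoted verbatim from \cite{JanLuc} (derived there from Theorems~3.1 and~3.10) and used throughout as a black box. So there is no proof in the paper to compare your proposal against. What you have written is, in fact, a reasonable high-level sketch of the argument in \cite{JanLuc} itself, which proceeds exactly via the Scalia-Tomba sequential-exposure device you describe: introduce independent hitting times $\tau_v$ for non-seeds, identify the final active count with the first fixed point of $R(t)=a+\#\{v:\tau_v\le t\}$, approximate $R$ by its mean $\mu(t)\approx a+n(tp)^r/r!$, and analyze the sign of $h(t)=\mu(t)-t$ at its minimizer $t_*=\bigl((r-1)!/(np^r)\bigr)^{1/(r-1)}$, where $h(t_*)\approx a-a_c$. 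Your identification of the role of the hypotheses ($p\ll n^{-1/r}$ forcing $t_*\to\infty$ so that $\delta a_c\gg\sqrt{t_*}$ dominates fluctuations, $p\gg n^{-1}$ driving the endgame) is correct. If your goal was to reproduce the paper's own argument, there is nothing to reproduce; if it was to supply a self-contained proof, your outline is sound but would require substantially more work to make rigorous, particularly the uniform-in-$t$ concentration over the full range and the precise two-term asymptotic for $\tau(A)$---the $\ln n/(np)$ mop-up term you flag is indeed the part that demands the most care in \cite{JanLuc}.
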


For example, Theorem~\ref{thm:Janson} implies that when $G\sim G(n,p)$ with $p$ as above, then with high probability $m(G,2)\leq \frac{1+\delta}{2np^2}$. (Observe that $n^2pe^{-pn}=o(n/d^2)$ for the range of $p$ in Theorem~\ref{thm:Janson}, and hence the set of vertices not activated by $A$ is small and can be added to the set of seeds with only negligible effect on the total number of seeds.)
To the best of our knowledge, the upper bound $m(G,r)\leq (1+\delta)a_c$ was the best upper bound known on $m(G,r)$ in random graphs prior to our work.

The lower bound of Theorem~\ref{thm:Janson} implies that a \emph{randomly chosen} set of $(1-\delta)a_c$ vertices has only negligible probability of being contagious.
Our upper bound in Theorem~\ref{thm:random} (whose proof involves a more sophisticated choice of set of seeds) implies that for such graphs $m(G,r)$ is with high probability significantly smaller than $a_c$. 
This shows that choosing an initial set of seeds carefully (rather than uniformly at random) is typically beneficial for this key model of random graphs.

It is proven in \cite{JanLuc} that when $p \gg n^{-1/r}$, an arbitrary set of size $r$ of activated vertices will infect the whole of $G(n,p)$ w.h.p. Similarly to Theorem~\ref{thm:random}, Theorem~\ref{thm:threshold} demonstrates that a careful choice of the seeds results in a contagious set of size $r$ for $p$ much smaller than $n^{-1/r}$.

Theorem~\ref{thm:random} and the constructive nature of the upper bound there imply that there is a polynomial time algorithm that for most graphs (from the distribution specified in Theorem~\ref{thm:random}) returns a contagious set whose size is within a constant factor of the minimum possible. In contrast, on worst-case instances, approximating the minimal size of a contagious set within a ratio better than $O(2^{\log^{1-\delta}n})$ ($n$ is the number of vertices) is intractable for every $\delta \in (0,1)$, unless NP $ \subseteq$ DTIME$(n^{poly(\log n)})$ \cite{Chen09}.

Theorem~\ref{thm:Janson} (taken from~\cite{JanLuc}) considers also $\tau$, the number of generations until complete activation. The parameter $\tau$ has been studied also in families of graphs such as grids \cite{Bolo1,Bolo2} and dense graphs \cite{polo}. We consider $\tau$ in the context of Theorem~\ref{thm:threshold} but not in the context of Theorem~\ref{thm:random}. We briefly discuss $\tau$ further in Section~\ref{sec:concl}.

The minimal number of edges that forces an $n$-vertex graph to satisfy $m(G,r)=r$ was considered in \cite{polo}. For example, it is proven that a graph having at least ${n-1 \choose 2}+1$ edges must satisfy $m(G,2)=2$. This result is tight, as $m(G,2)=3$ (for $n \geq 3$) when $G$ is a clique on $n-1$ vertices along with an additional isolated vertex.

The current paper is one part of a larger body of work whose preliminary version is available in \cite{Expanders}. Other parts of that work will be published separately, and they concern contagious sets in $d$-regular graphs. For example, it is shown there that sufficiently strong expansion properties (e.g., spectral gap $d-O(\sqrt{d})$, or girth $\Omega(\log \log d)$) ensure that $m(G,2) \leq O(n/d^2)$, where $n=(V(G)|$. (Recall in contrast that the best general upper bound for the value of $m(G,2)$ for $d$-regular graphs on $n$ vertices is $m(G,2)\le \frac{2n}{d+1}$ \cite{Re12}; this bound is easily seen to be tight.) In addition, it is shown that when $G$ is a random $d$-regular graph over $n$ vertices (which with high probability is an excellent spectral expander, see \cite{friedman03}), it holds that $m(G,2) \geq \Omega(\frac{n}{d^2\log d})$ with high probability. That lower bound regarding random $d$-regular graphs is established using ideas similar to those used to establish the lower bound in Theorem~\ref{thm:random}.

\subsection{Overview of proof techniques}
\label{sec:overview}

The proof of the upper bound in Theorem \ref{thm:random} is based on the following observation (we consider $r=2$ throughout this section -- similar reasoning applies for $r>2$).
For a subset $A\subseteq V$, we denote by $N(A)$ the set of all vertices in $V \setminus A$ having a neighbor in $A$.
Suppose we have an initial set $A$ of seeds, and consider $N(A)$. Given that the graph is random, one can analyze the distribution of the sizes of the connected components of the subgraph induced by $N(A)$.
Introducing a single seed in a connected component of size $k$ then activates the whole component, thus giving $k$ activated vertices per investment of one seed.
It turns out that we can activate a set of size $\frac{n}{d^2} = \frac{1}{np^2}$ in $G$ by choosing $O\left(n\frac{\log\log d}{d^2 \log d}\right)$ seeds in this way. Thereafter, the results of~\cite{JanLuc} can be used in order to deduce that $G$ (apart from a set of negligible cardinality which can be activated separately) is activated with high probability.

To achieve the improved upper bound in Theorem~\ref{thm:random}, we repeat the procedure above iteratively. In iteration~0, choose an arbitrary set $A_0$ of seeds of size $\frac{n}{d^2\log d}$. Next, for each $1 \le i \le \log\log d$, consider the external neighborhood of the vertices activated in iteration $i-1$. Within this neighborhood identify the largest connected components, and activate a set $B_i$ that includes one vertex from each component (thus infecting the whole component), until the sum of sizes of infected components reaches $\frac{2^i n}{d^2\log d}$. After $\log\log d$ iterations we have $\frac{n}{d^2}$ active vertices, which as previously noted suffices to infect the whole of $G$ (apart from a set of negligible cardinality treated separately). The total number of activated vertices is $|A_0| + \sum_{i=1}^{\log\log d} |B_i|$. We show that the latter sum is bounded by $O(\frac{n}{d^2\log d})$, with high probability.

Our lower bound in Theorem~\ref{thm:random} is based on observing that if there is a contagious set of size $t_0$, then adding to it the first $t-t_0$ infected vertices gives an induced subgraph with $t$ vertices and at least $2(t-t_0)$ edges. For a choice of $t_0 < \frac{n}{6d^2 \log d}$ and $t= \frac{n}{3d^2}$, a simple probabilistic argument shows that a random graph with high probability does not contain any such subgraph.

For Theorem~\ref{thm:threshold}, the proof of the lower bound on the threshold probability for $m(G,2)=2$ follows the same principles as the lower bound for Theorem~\ref{thm:random} (but with $t_0 = 2$).
For the proof of the upper bound (the typical existence of a contagious set of size $2$ when $p\ge \frac{C}{\sqrt{n \log n}}$) we represent $G$ as a union of two random graphs $G_1$ and $G_2$ with edge probabilities $\frac{1}{\sqrt{n \log n}}$  and $\frac{C_1}{\sqrt{n \log n}}$, respectively.
We first show that in $G_1$, a random set of two vertices has probability significantly higher than $\frac{\log n}{n}$ of infecting $\Omega(\log n)$ additional vertices. We then show that this implies that with high probability, there is at least one pair of vertices that infects a set $S$ of size $\Omega(\log n)$ in $G_1$. Finally, the results of \cite{JanLuc} are used to prove that with high probability $S$ will infect the whole of $G_2$, and thus the whole of $G$.

\subsection{Preliminaries and notation}

Let $H=(V,E)$ be an undirected graph. For $A,B\subseteq V$, we define $E(A)$ to be the set of all edges spanned by $A$ and $E(A,B)$ the set of all edges with one endpoint in $A$ and one endpoint in $B$.  The notation $\log$ denotes logarithms in base $2$ and $\ln$ denotes natural logarithms. The set of integers $\{1 \ldots \ell\}$ for $\ell \geq 1$ is denoted by $[\ell]$. We will reserve the notation $G$ for $G(n,p)$ throughout this paper, omitting the dependency on $n,p$ when clear from the context.

We shall use the term {\em infected} vertex to describe an activated vertex that is not one of the {\em seeds}, but has rather become activated by having at least $2$ active neighbors. We say a set $S \subseteq V$ is \emph{infected} if all the vertices of $S$ are infected.

We close this section with a version of Chernoff's inequality (see, e.g., \cite{MU}).
\begin{lemma}
  \label{lemma:Chernoff}
  Suppose that $X = \sum_{i=1}^mX_i$, where every $X_i$ is a $\{0,1\}$-random variable with $\Pr(X_i=1)=p$ and the $X_i$s are jointly independent. Then for arbitrary $\eta \in (0,1)$, it holds that

  \[
  \Pr(X<(1-\eta)pm) \le \exp(-pm\eta^2/2),
  \]

  and
  \[
  \Pr(X>(1+\eta)pm) \le \exp(-pm\eta^2/3).
  \]
  \end{lemma}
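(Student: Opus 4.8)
The plan is to use the standard exponential-moment (Bernstein--Chernoff) method. Since $X$ is a sum of $m$ jointly independent Bernoulli$(p)$ variables we have $X \sim \Bin(m,p)$ with $E[X]=pm$. For any $s>0$, Markov's inequality applied to the nonnegative random variable $e^{sX}$ gives $\Pr(X>(1+\eta)pm) = \Pr(e^{sX} > e^{s(1+\eta)pm}) \le e^{-s(1+\eta)pm}\, E[e^{sX}]$. The key simplification is that independence factorizes the moment generating function: $E[e^{sX}] = \prod_{i=1}^m E[e^{sX_i}] = (1-p+pe^s)^m = (1+p(e^s-1))^m$. Applying the elementary inequality $1+x \le e^x$ with $x=p(e^s-1)$ yields $E[e^{sX}] \le \exp(pm(e^s-1))$, and hence $\Pr(X>(1+\eta)pm) \le \exp(pm(e^s - 1 - s(1+\eta)))$.

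The next step is to optimize the free parameter $s$. Minimizing the exponent $e^s-1-s(1+\eta)$ over $s>0$ gives $s=\ln(1+\eta)$ (which is positive since $\eta>0$), and substituting back produces the classical bound $\Pr(X>(1+\eta)pm) \le \exp(pm(\eta - (1+\eta)\ln(1+\eta)))$. It then remains to establish the purely analytic inequality $(1+\eta)\ln(1+\eta) - \eta \ge \eta^2/3$ for every $\eta \in (0,1)$, which delivers the claimed upper-tail bound $\exp(-pm\eta^2/3)$.

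The lower tail is handled symmetrically using $s>0$ and the variable $e^{-sX}$: Markov gives $\Pr(X<(1-\eta)pm) \le e^{s(1-\eta)pm}\,E[e^{-sX}]$, the same factorization together with $1+x \le e^x$ yields $E[e^{-sX}] \le \exp(pm(e^{-s}-1))$, and optimizing at $s=-\ln(1-\eta)>0$ gives $\Pr(X<(1-\eta)pm) \le \exp(pm(-\eta - (1-\eta)\ln(1-\eta)))$. Here the analytic inequality to verify is $-\eta - (1-\eta)\ln(1-\eta) \le -\eta^2/2$, equivalently $(1-\eta)\ln(1-\eta) \ge -\eta + \eta^2/2$, which produces the stronger constant $1/2$ in the exponent.

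The only genuine work is the two elementary calculus inequalities, and I expect these to be the main (mild) obstacle, since everything else is mechanical. Both follow by comparing Taylor expansions about $\eta=0$: writing $(1+\eta)\ln(1+\eta) = \eta + \eta^2/2 - \eta^3/6 + \cdots$ shows the upper-tail exponent stays above $\eta^2/3$ on $(0,1)$, while $-(1-\eta)\ln(1-\eta) = \eta - \eta^2/2 - \eta^3/6 - \cdots$ shows the lower-tail exponent is at most $-\eta^2/2$; alternatively one checks that the relevant difference function and its first derivative vanish at $0$ and that its second derivative keeps a suitable sign, so that monotonicity on $[0,1]$ forces nonnegativity. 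The asymmetry between the constants $1/2$ and $1/3$ is intrinsic to this elementary step: the lower-tail function admits the cleaner coefficient, whereas for the upper tail the coefficient must be weakened to $1/3$ in order to absorb the $-\eta^3/6$ correction uniformly over the entire interval $(0,1)$.
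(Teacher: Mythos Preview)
Your argument is the standard exponential-moment derivation of the multiplicative Chernoff bounds, and it is correct. The two calculus inequalities can indeed be verified by the second-derivative argument you outline: for the upper tail set $f(\eta)=(1+\eta)\ln(1+\eta)-\eta-\eta^2/3$, note $f(0)=f'(0)=0$ and check that $f'(\eta)=\ln(1+\eta)-2\eta/3$ is nonnegative on $[0,1]$ (it vanishes at $0$, increases until $\eta=1/2$, then decreases but stays positive since $f'(1)=\ln 2-2/3>0$); the lower-tail function $g(\eta)=(1-\eta)\ln(1-\eta)+\eta-\eta^2/2$ is even simpler because $g''(\eta)=\eta/(1-\eta)\ge 0$ throughout.

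As for comparison with the paper: the paper does not prove this lemma at all. It is stated as a quoted version of Chernoff's inequality with a reference to the textbook \cite{MU}, and is used as a black box in the subsequent sections. Your write-up supplies exactly the textbook proof that the citation points to, so there is no methodological divergence to discuss.
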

  \subsection{Organization}
  We first present our results when $r=2$ as this case is more transparent, making it easier to present the main ideas behind the proofs. In Section~\ref{sec:two} we prove that with high probability $m(G,2)=\Theta(\frac{n}{d^2 \log d})$, dealing first with the upper bound and then establishing a lower bound. In Section~\ref{sec:thresholdtwo} we determine the asymptotic threshold of having a contagious set of size $2$. In Section~\ref{sec:generalization} we discuss how to generalize the results of Sections~\ref{sec:two} and \ref{sec:thresholdtwo} to the case where $r>2$. In Section \ref{sec:concl} we present some concluding comments.

\section{$m(G,2)$ in random graphs}\label{sec:two}
In this section we prove Theorem~\ref{thm:random} for the case $r=2$. Unless explicitly stated, we will always focus on $G(n,p)$ where $p: = \frac{d}{n}$ is as in the range of Theorem~\ref{thm:random}.

\subsection{Upper bound}
\bigskip

The following lemma can be derived from known results (e.g., \cite{Boll}) but we present a self-contained proof for completeness.
\begin{lemma}
\label{lemma:subcritical}
Let $H:=G(n_0,q)$ be the binomial random graph with $n_0$ vertices and edge probability $q$ (we assume $n_0$ is large enough). Let $k=O(\log n_0)$ be an integer and $q=\frac{c}{n_0}$.
Then for every $c<1/20$, the probability a given vertex $v$ belongs to a connected component of size at least $k$ is at least $\left(\frac{c}{3}\right)^{k-1}$. Furthermore, with probability at least $1-\exp{(-\Omega(\frac{n_0}{k}\left(\frac{c}{3}\right)^{k-1}))}$ the number of vertices lying in components of size at least $k$ is at least $\left(\frac{c}{3}\right)^{k-1}\cdot n_0/4$.
\end{lemma}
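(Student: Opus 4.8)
The plan is to derive both assertions from one device: a greedy path-growing exploration of $H$ that exposes edges one vertex at a time. To bound the probability that a fixed vertex $v=v_0$ lies in a component of size at least $k$, grow a self-avoiding walk as follows. Having chosen distinct $v_0,\dots,v_{i-1}$ with $1\le i\le k-1$, expose all not-yet-exposed edges from $v_{i-1}$ to $V\setminus\{v_0,\dots,v_{i-1}\}$; there are $n_0-i$ of them, and since every previously exposed edge is incident to $\{v_0,\dots,v_{i-2}\}$ these edges are fresh, hence present independently with probability $q$. If at least one is present, let $v_i$ be such a neighbour (say, of least index) and continue; otherwise stop. If the walk reaches $v_{k-1}$ then $v$ lies in a component of size at least $k$. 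Conditioned on having reached step $i$, the step succeeds with probability $1-(1-q)^{n_0-i}\ge 1-(1-q)^{n_0-k}\ge 1-e^{-c(1-k/n_0)}$; since $k=O(\log n_0)$, for $n_0$ large this is at least $1-e^{-0.99c}$, and the elementary inequality $1-e^{-x}\ge x-x^2/2$ together with $c<1/20$ gives $1-e^{-0.99c}>c/3$. Multiplying the $k-1$ conditional success probabilities yields probability at least $(c/3)^{k-1}$, proving the first assertion.

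For the second assertion I would run $\ell:=\lfloor 0.3\,n_0/k\rfloor$ \emph{rounds} of this exploration. In round $j$ pick a start vertex $u_j$ outside the set $F_j$ of all vertices used in rounds $1,\dots,j-1$ and grow a $k$-vertex path from $u_j$ entirely inside $V\setminus F_j$. Since $|F_j|\le (j-1)k<0.3\,n_0$, at every step of round $j$ there are at least $0.7\,n_0-k\ge 0.6\,n_0$ available targets, so each step's conditional success probability is at least $1-e^{-0.6c}>c/3$; and since every edge exposed in an earlier round is incident to $F_j$ while every edge exposed in round $j$ joins two vertices of $V\setminus F_j$, the newly exposed edges are fresh and the rounds are genuinely independent in the relevant conditional sense. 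Hence, writing $p_k:=(c/3)^{k-1}$, the number $Z$ of rounds that succeed (produce a full $k$-vertex path, and thereby exhibit a component of size $\ge k$) satisfies $\Pr[Z<t]\le\Pr[\Bin(\ell,p_k)<t]$ for every $t$.

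The $k$-vertex paths obtained from successful rounds are pairwise vertex-disjoint, so their union has exactly $kZ$ vertices and is contained in the set of vertices lying in components of size $\ge k$; thus $Y\ge kZ$, where $Y$ denotes the size of that set. Applying the lower-tail bound of Lemma~\ref{lemma:Chernoff} to $\Bin(\ell,p_k)$ with $\eta=\tfrac{1}{10}$ gives $\Pr[Z<0.9\,\ell p_k]\le\exp(-\ell p_k/200)$. On the complementary event, $Y\ge 0.9\,k\ell p_k\ge 0.9\cdot 0.29\,n_0p_k>n_0p_k/4$, where we used $\ell\ge 0.29\,n_0/k$, valid for $n_0$ large; and the failure probability $\exp(-\ell p_k/200)$ is $\exp(-\Omega(\tfrac{n_0}{k}(c/3)^{k-1}))$, as claimed.

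The only step that requires real care is the bookkeeping that makes the rounds independent — that forbidding the already-used vertices keeps every newly exposed edge fresh — together with the balancing of the two constants: the forbidden set must be permitted to grow to a constant fraction of $n_0$, so that $kZ\approx k\ell p_k$ beats $n_0p_k/4$, yet must stay small enough that the remaining $\approx 0.6\,n_0$ targets keep every per-step success probability above $c/3$. The slack between $c/3$ and the true per-step probability $\approx 0.6c$, which is exactly where the hypothesis $c<1/20$ enters, is what makes both constraints simultaneously satisfiable.
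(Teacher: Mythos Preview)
Your proof is correct and follows essentially the same approach as the paper: both arguments grow self-avoiding paths by exposing edges one endpoint at a time, run $\Theta(n_0/k)$ vertex-disjoint rounds to manufacture independence, and finish with a Chernoff bound on the resulting binomial. The only differences are cosmetic---the paper restricts each step to a fixed pool of exactly $n_0/2$ unused vertices (so the per-step success probability is literally the same at every step), whereas you allow the pool to shrink and compensate with the slack in $c/3$; the resulting constants ($\lfloor n_0/(3k)\rfloor$ rounds versus your $\lfloor 0.3\,n_0/k\rfloor$) are immaterial.
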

\begin{proof}
Consider the following iterative procedure of exposing edges in $H$. Every vertex has a mark: either it is \emph{used} or it is \emph{unused}. In the beginning of the algorithm, all vertices are marked as unused and all edges of $H$ are not exposed.
We continue the process as long as the number of unused vertices is at least $\frac{n_0}{2}+k$ (or, differently put, the number of used vertices is at most $\frac{n_0}{2}-k)$.

In the beginning of every iteration, we choose an unused vertex $v$ in $H$, and attempt to expose a simple path in $H$ containing $v$ as follows. Consider a set $S$ of exactly $n_0/2$ unused vertices. Expose all edges between $v$ and $S$. If there is a vertex $v_2 \in S$ connected to $v$, add $v_2$ to the path. Continue in this fashion (attaching a vertex to the last vertex appended to the path) until either one of two cases occurs: a {\em success}, meaning that the size of the path containing $v$ reaches $k$, or a {\em failure}, meaning that we have failed to find a path of $k$ vertices containing $v$ (namely, we constructed a path $P$ of $l<k$ vertices, and the last vertex on the path has no edge to any of the vertices of the corresponding set $S$). Finally, proceed by marking all the vertices that are in the path rooted at $v$ as used.

The probability that all vertices in a set $U$ of $n_0/2$ unused vertices are not connected to a vertex $w \notin U$ is $(1-q)^{n_0/2}$. It follows that the probability that  during an iteration we can append a new vertex to a path of length smaller than $k$ is exactly
$$(1-(1-q)^{n_0/2}) \geq \frac{n_0q}{3},$$
and this holds \emph{independently} of the length of the path we have constructed thus far.
Hence the probability
we succeed in growing a path of length $k$ (and hence in a connected component of size at least $k$) at a given iteration is at at least
$$\left( \frac{n_0q}{3}\right)^{k-1} = \left( \frac{c}{3}\right)^{k-1}.$$

As $\lfloor n_0/(3k) \rfloor \leq\frac{\frac{n_0}{2}-k}{k}$ (recall we assume $k=O(\log n_0)$), it follows that the distribution of the number of successes (until less than $\frac{n_0}{2}+k$ unused vertices remain) stochastically dominates the binomial distribution with $\lfloor n_0/(3k) \rfloor$ trials and success probability $\left(\frac{c}{3}\right)^{k-1}$ (the exact number of trials depends on the number of failures, but failures only increase the number of trials). Furthermore, standard concentration results concerning the binomial distribution imply that probability at least $1-\exp{(-\Omega(\frac{n_0}{k}\left(\frac{c}{3}\right)^{k-1}))}$, the number of successes is at least $\frac{n_0}{4k}\left(\frac{c}{3}\right)^{k-1}$. Since every success places $k$ vertices (rather than just one) in a component of size at least $k$ the lemma is proven.
\end{proof}

\bigskip

We shall also rely on the following lemma.
\begin{lemma}\label{lemma:partial_infec}
Let the activation threshold be $r=2$ and let $d_0$ be a sufficiently large constant. Then for $d=d(n)\ge d_0$, a random graph $G\sim G(n,p)$ is w.h.p. such that activating any set of size at least $n/2$ infects all but at most $n/d^3$ vertices.
\end{lemma}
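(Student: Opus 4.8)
The plan is to show that w.h.p. the set $W$ of vertices that remain inactive after activating some set $S$ of size at least $n/2$ is small, by a union bound over all candidate "bad" sets $W$. The key observation is that if $S$ is active and $W \subseteq V \setminus S$ is the set of vertices never infected, then every vertex of $W$ has at most one neighbor in $V \setminus W$ (otherwise it would have $\ge 2$ active neighbors once $V \setminus W$ is fully active, since $V\setminus W \supseteq S$ eventually gets activated — more carefully, one argues that the final active set $\langle S\rangle = V \setminus W$ is ``closed'', so each $v \in W$ has $\le 1$ neighbor in $V \setminus W$). Hence each vertex of $W$ sends at most one edge out of $W$, which means the bipartite edge count $e(W, V\setminus W) \le |W|$. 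Since $|V \setminus W| \ge |S| \ge n/2$, the expected number of such edges is about $|W|\cdot p \cdot n/2 = |W| d /2$, so for this to be as small as $|W|$ is a large deviation event of probability roughly $e^{-\Omega(|W| d)}$ once $d$ is a large enough constant.

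First I would fix an integer $w \ge n/d^3$ and bound the probability that there exists a set $W$ of size exactly $w$ with $e(W, V \setminus W) \le w$. The number of vertices outside $W$ is at least $n/2$ (we only need to union-bound over $W$; the set $S$ need not be guessed, because the edge-count property of $W$ holds regardless of which $S$ of size $\ge n/2$ was chosen and is monotone in $V\setminus W$). The number of potential edges between $W$ and $V \setminus W$ is at least $wn/2$, each present independently with probability $p = d/n$, so $e(W, V\setminus W)$ stochastically dominates $\Bin(wn/2, d/n)$ with mean $wd/2$. By the lower-tail Chernoff bound in Lemma \ref{lemma:Chernoff} with $\eta$ close to $1$ (using $wd/2 \ge d \gg 1$, so $w \ge 1$, and $d$ a large constant makes $wd/2 \gg w$), we get $\Pr[e(W,V\setminus W) \le w] \le \exp(-c_1 w d)$ for a constant $c_1 > 0$. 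Then I would union-bound over the at most $\binom{n}{w} \le (en/w)^w \le (e d^3)^w$ choices of $W$, giving a bound of $\exp(w(3\ln d + O(1)) - c_1 w d)$, which for $d \ge d_0$ a sufficiently large constant is at most $\exp(-c_2 w d)$ for some $c_2>0$.

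Finally I would sum over all $w$ from $\lceil n/d^3\rceil$ to $n$: the resulting series $\sum_{w \ge n/d^3} \exp(-c_2 w d)$ is geometric and bounded by $2\exp(-c_2 (n/d^3) d) = 2\exp(-c_2 n/d^2)$, which is $o(1)$ as long as $d = o(\sqrt{n})$ — and this is guaranteed in the regime of Theorem \ref{thm:random}, since there $d \ll (n\log\log n/\log^2 n)^{1/2} = o(\sqrt n)$ when $r = 2$. The main subtlety — and the step I would be most careful about — is the combinatorial claim that the un-infected set $W$ really does satisfy $e(W, V \setminus W) \le |W|$: one must argue that the terminal active set $\langle S \rangle$ is a closed set (no vertex outside it has two neighbors inside it), hence every $v \in W = V \setminus \langle S\rangle$ has at most $r-1 = 1$ neighbors in $\langle S \rangle \supseteq V \setminus W$, which bounds the cross edges by $|W|$. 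Everything else is a routine first-moment computation, and the stated range of $d$ is exactly what makes the union bound close.
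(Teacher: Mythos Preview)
Your argument is essentially the same as the paper's: the key observation that the terminal uninfected set $W=V\setminus\langle S\rangle$ satisfies $e(W,V\setminus W)\le |W|$, followed by a Chernoff bound on $\Bin(w(n-w),p)$ and a union bound over $\binom{n}{w}$ choices of $W$, is exactly what the paper does. The only difference is scope: the paper proves the lemma for \emph{all} $d\ge d_0$ by splitting into the cases $d\le n^{1/3}$ (where $n/d^3\ge 1$ and your bound $\binom{n}{w}\le(ed^3)^w$ applies) and $d>n^{1/3}$ (where the sum starts at $w=1$ and one uses $\binom{n}{w}\le (en)^w$ together with $np>n^{1/3}$), whereas you explicitly restrict to $d=o(\sqrt{n})$ and appeal to the hypothesis of Theorem~\ref{thm:random} for justification. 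That restriction is harmless for the intended application, but note that the lemma as stated claims the conclusion for every $d\ge d_0$; the missing large-$d$ case is handled by the same computation you already set up, observing that when $n/d^3<1$ the sum $\sum_{w\ge 1}\exp(-c_2wd)\le 2\exp(-c_2 d)=o(1)$ since then $d\to\infty$.
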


\begin{proof}
Let $A_0$ be an initially activated set, and let $U=[n]-\langle A_0\rangle$. Then $|U|\le n/2$, and every vertex of $U$ has at most one neighbor outside of $U$, implying that the number of edges crossing between $U$ and its complement is at most $|U|$. The probability of having such a set $U$ of cardinality $|U|\ge n/d^3$ in $G(n,p)$ can be estimated from above through Lemma \ref{lemma:Chernoff} as follows:
\begin{gather*}
\sum_{k=max\{1,n/d^3\}}^{n/2} \binom{n}{k}Pr [\Bin(k(n-k),p)\le k]
 \le \sum_{k=\max\{1,n/d^3\}}^{n/2} \binom{n}{k} e^{-\frac{knp}{8}}\\
\le \sum_{k=\max\{1,n/d^3\}}^{n/2} \left(\frac{en}{k}\right)^k e^{-\frac{knp}{8}}
\end{gather*}
We now distinguish between two cases: if $d \le n^{1/3}$ then we upper bound the summation above by
$$\sum_{k=n/d^3}^{n/2} (ed^{1/3}\cdot e^{-\frac{np}{8}})^k=o(1).$$
Otherwise, if $d>n^{/3}$ the summation can be upper bounded by
$$\sum_{k=1}^{n/2}(en\cdot e^{-n^{2/3}/8})^k=o(1),$$
as desired.

\end{proof}
\bigskip

\begin{theorem}\label{thm:main}
If $d=np$ satisfies
$1 \ll d \ll \left(\frac{n\log\log n}{\log^2 n}\right)^{1/2}$ 
and $G\sim G(n,p)$, then whp $m(G,2)\le \frac{13n}{d^2\log d}$.
\end{theorem}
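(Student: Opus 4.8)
The plan is to implement the iterative scheme sketched in the overview. I will set $\ell := \lfloor \log\log d \rfloor$ and maintain a growing active set $S_0 \subseteq S_1 \subseteq \cdots \subseteq S_\ell$ with $|S_i|$ roughly $\frac{2^i n}{d^2\log d}$, so that $|S_\ell| \approx \frac{n}{d^2\log d}\cdot \log d = \frac{n}{d^2}$. Start by letting $S_0$ be an arbitrary set of $\lceil \frac{n}{d^2\log d}\rceil$ seeds. In step $i \geq 1$, look at the external neighborhood $N(S_{i-1})$ in the not-yet-exposed part of the graph. Since $|S_{i-1}| \ll n$, a typical vertex outside $S_{i-1}$ has a Poisson-like number of neighbors in $S_{i-1}$, so the expected size of $N(S_{i-1})$ is $\approx n\bigl(1-e^{-d|S_{i-1}|/n}\bigr) \approx d|S_{i-1}|$ when $d|S_{i-1}|/n$ is small; a Chernoff bound (Lemma~\ref{lemma:Chernoff}) gives concentration. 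The subgraph induced on $N(S_{i-1})$ is essentially $G(n_0,q)$ with $n_0 = |N(S_{i-1})|$ and $q = p = d/n$; crucially, $n_0 q \approx d^2 |S_{i-1}|/n$, which is a constant bounded away from $1$ by our choice of the initial size (and stays in the subcritical regime up through step $\ell$ because $|S_{i-1}| \leq |S_\ell| \approx n/d^2$). Now apply Lemma~\ref{lemma:subcritical} with $c \approx d^2|S_{i-1}|/n < 1/20$ (shrink $S_0$ by a constant factor if necessary so $c$ is small enough at every step) and with threshold $k := k_i$ chosen so that $(c/3)^{k-1} \cdot n_0/4 \geq |S_{i-1}|$, i.e. $k_i = \Theta\bigl(\frac{\log(n_0/|S_{i-1}|)}{\log(1/c)}\bigr) = \Theta\bigl(\frac{\log d}{\log\log d}\bigr)$. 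This tells us that w.h.p. at least $|S_{i-1}|$ vertices of $N(S_{i-1})$ lie in components of size $\geq k_i$. Pick one vertex (a new seed) in each such large component: $|B_i| \leq |N(S_{i-1})|/k_i \approx d|S_{i-1}|/k_i$ new seeds, and activating $S_{i-1}\cup B_i$ floods all those components, giving $|S_i| \geq |S_{i-1}| + k_i\cdot|B_i|$; arrange $|B_i|$ so that $|S_i| = 2|S_{i-1}|$, which costs $|B_i| \approx |S_{i-1}|/k_i = \Theta\bigl(\frac{|S_{i-1}|\log\log d}{\log d}\bigr)$ seeds.

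Summing the seed budget: the total is $|S_0| + \sum_{i=1}^{\ell}|B_i|$, where $|B_i| = \Theta\bigl(\frac{|S_{i-1}|\log\log d}{\log d}\bigr)$ and $|S_{i-1}| = 2^{i-1}|S_0|$. Hence $\sum_{i=1}^{\ell}|B_i| = \Theta\bigl(\frac{|S_0|\log\log d}{\log d}\bigr)\sum_{i=1}^{\ell}2^{i-1} = \Theta\bigl(\frac{|S_0|\,2^\ell\log\log d}{\log d}\bigr)$. Since $2^\ell \approx \log d$, this is $\Theta(|S_0|\log\log d) = \Theta\bigl(\frac{n\log\log d}{d^2\log d}\bigr)$ — which is $o\bigl(\frac{n}{d^2}\bigr)$ but is \emph{not} $O\bigl(\frac{n}{d^2\log d}\bigr)$ as stated. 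To actually hit the claimed $\frac{13n}{d^2\log d}$ bound I must be more economical: instead of doubling $|S_i|$ each round, I should grow geometrically by a larger factor (say a factor of $d^{\epsilon}$, or choose the number of rounds to be $O(1)$ rather than $\log\log d$ while taking larger $k_i$), so that the dominant term in $\sum|B_i|$ is a geometric series whose total is $O(|S_0|)$ rather than $O(|S_0|\log\log d)$; equivalently, choose $|B_i|$ so that each $B_i$ is a constant fraction of the total budget and the growth compounds fast enough that $\ell = O(\log\log d / \log\log\log d)$ or so. The bookkeeping here — matching the per-step expansion factor $k_i$ against the subcritical constraint $c<1/20$ and against the geometric growth of $|S_{i-1}|$ — is what produces the clean constant $13$, and it is the part I would carry out carefully rather than sketch.

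Once $|S_\ell| \geq \frac{n}{d^2}$ (equivalently $|S_\ell| \geq \frac{1}{np^2}$), I invoke Theorem~\ref{thm:Janson}: this is at least $(1+\delta)a_c$ for $r=2$ since $a_c = \Theta\bigl(\frac{1}{np^2}\bigr)$ with a smaller constant, so activating $S_\ell$ infects all but $O\bigl(n^2 p e^{-pn}\bigr)$ vertices w.h.p.; that remainder is $o\bigl(\frac{n}{d^2\log d}\bigr)$ in our range of $d$, so we add it to the seed set at negligible cost. Finally, "all but a set of size $n/d^3$" in the worst case: actually Theorem~\ref{thm:Janson} already leaves only an exponentially small remainder, so Lemma~\ref{lemma:partial_infec} is not even needed for this step — though it (or rather the existence of $S_\ell$ of size $\geq n/2$) can serve as a fallback. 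The main obstacle is the second paragraph: making the geometric growth fast enough that the seed count telescopes to $O(|S_0|)$ while keeping every intermediate induced neighborhood genuinely subcritical (so Lemma~\ref{lemma:subcritical} applies with a fixed small $c$), and simultaneously keeping $k_i = O(\log n_0)$ as that lemma requires. A secondary technical point is independence: each step exposes the edges between $S_{i-1}$ and fresh vertices, and then the edges \emph{inside} $N(S_{i-1})$, all on previously unexposed pairs, so the $G(n_0,q)$ model for Lemma~\ref{lemma:subcritical} is legitimate provided $\ell$ is small enough that the total number of vertices touched, $O(|S_\ell|) = O(n/d^2) = o(n)$, never exhausts the pool of fresh vertices — which holds throughout.
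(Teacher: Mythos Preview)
Your overall architecture matches the paper's: iterate $\ell = \log\log d$ rounds, double the active set each round by seeding one vertex per large component in the external neighborhood, then invoke Theorem~\ref{thm:Janson}. But you misdiagnose the source of the $\log\log d$ blow-up, and your proposed fix (grow by $d^\epsilon$ per round, or use fewer rounds) is not what is needed.

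The point you miss is that the subcritical parameter $c = n_0 q \approx d^2|S_{i-1}|/n$ is \emph{not} constant across rounds: since $|S_{i-1}| \approx \frac{2^{i-1}n}{d^2\log d}$, you have $c \approx \frac{2^{i-1}}{\log d}$, which climbs from $\approx 1/\log d$ at $i=1$ to a bounded constant at $i=\ell$. Your own formula $k_i = \Theta\bigl(\frac{\log d}{\log(1/c)}\bigr)$ therefore gives $k_i \approx \frac{\log d}{\ell - i + O(1)}$, growing from $\Theta(\log d/\log\log d)$ up to $\Theta(\log d)$; you collapsed this to the constant value $\Theta(\log d/\log\log d)$, and that is exactly what produced the spurious $\log\log d$ factor. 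With the correct varying $k_i$, the seed cost at round $i$ is $|B_i| \approx |S_i|/k_i \approx \frac{|S_0|}{\log d}\cdot\frac{(\ell-i+O(1))\,2^i}{1}$, and summing with $j=\ell-i$ gives $|S_0|\sum_j \frac{j+O(1)}{2^j} = O(|S_0|)$. The paper does precisely this: it keeps the doubling and $\ell=\log\log d$, but sets $s_i = \frac{\log d - 4}{\ell - i + 4}$, and the resulting sum $\sum_i |D_i|$ telescopes to $\le \frac{11n}{d^2\log d}$.

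A smaller point: Lemma~\ref{lemma:partial_infec} is not redundant. After Stage~I you have exposed edges inside each $B_i$ and between $C_{i-1}$ and the outside, so Theorem~\ref{thm:Janson} is applied only to $G[C_\ell \cup (V\setminus\bigcup B_i)]$; the vertices of $\bigcup B_i$ that were not in the chosen components are not covered by that theorem. The paper handles this by observing that Stage~II activates $\ge n/2$ vertices, then invoking Lemma~\ref{lemma:partial_infec} to reduce the uninfected set to $\le n/d^3$, which is absorbed into the seed budget.
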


We give a constructive proof for Theorem~\ref{thm:main}. Namely, we provide an algorithm that finds a contagious set that is not larger than the upper bound in this Theorem.
Our algorithm is composed of three stages described below.

{\bf Stage I.} Set
$$
\ell = \log\log d\,.
$$
Initialize $B_0=C_0=D_0$ to be a fixed subset of $[n]$ of size $\frac{n}{d^2\log d}$.

For $i=1,\ldots,\ell$ repeat:

Set
$$
s_i = \frac{\log d-4}{\ell-i+4}\,.
$$
\noindent{\it Step (i1).} Expose edges of $G$ between $C_{i-1}$ and $V\setminus\bigcup_{j=0}^{i-1}B_j$. Let $B_i$ be an arbitrary set of $\frac{d|C_{i-1}|}{2}$ neighbors of  $C_{i-1}$ in $V\setminus\bigcup_{j=0}^{i-1}B_j$. If there is no such set -- declare a failure;

\noindent{\it Step (i2).} Expose edges of $G$ inside $B_i$. Let $x_i$ be the number of connected components of $G[B_i]$ and define
$y_i=\min\left\{x_i,\frac{n}{d^22^{\ell-i}s_i}\right\}$. Let $T_{i1},\ldots,T_{iy_i}$ be the $y_i$ largest components of $G[B_i]$ (breaking ties arbitrarily). If
$$
|\bigcup_{j=1}^{y_i}T_{ij}|<\frac{n}{d^22^{\ell-i}}
$$
-- declare a failure. Otherwise form $D_i$ by choosing one arbitrary vertex from each $T_{ij}$. Clearly
$$
|D_i|=y_i\le \frac{n}{d^22^{\ell-i}s_i}\,.
$$
Let $C_i$ be an arbitrary subset of $\bigcup_{j=1}^{y_i}T_{ij}$ of size
$$
|C_i|=\frac{n}{d^22^{\ell-i}}\,.
$$

\medskip

Assume that Stage I was successful for every $1 \le i \le \ell$. Denote
$$
A_{01}=D_0\cup D_1\cup\ldots\cup D_\ell\,.
$$
The algorithm activates all vertices in $A_{01}$. Finally let $A_{02}$ the set of vertices that remain inactive after $A_{01}$ is activated. We activate all vertices in $A_{02}$
We now prove a series of propositions that upper bound the size of $A_{01} \bigcup A_{02}$.
\begin{proposition}
Activating $A_{01}$ infects $\bigcup_{j=0}^{\ell}C_j$.
\end{proposition}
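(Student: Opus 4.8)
The plan is a double induction. The outer induction is on $i\in\{0,1,\ldots,\ell\}$, with the claim that once all vertices of $A_{01}$ have been activated, the bootstrap process eventually activates every vertex of $C_0\cup C_1\cup\cdots\cup C_i$. The base case $i=0$ is immediate because $C_0=D_0\subseteq A_{01}$. (Strictly, the vertices of $C_0$ — and the vertices of $D_j$ for $j\ge 1$ — are seeds rather than ``infected'' vertices in the terminology of the preliminaries; but the vertices of $C_j\setminus D_j$ are genuinely infected by the argument below, which is what is needed when this proposition is applied.)

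For the inductive step I would first record three facts about the objects built in Stage~I at index $i$: (a) every vertex of $B_i$ has a neighbor in $C_{i-1}$, since $B_i$ is by construction a set of neighbors of $C_{i-1}$; (b) $B_i\cap C_{i-1}=\emptyset$, because $B_i\subseteq V\setminus\bigcup_{j<i}B_j$ while $C_{i-1}\subseteq B_{i-1}$ (indeed $C_{i-1}\subseteq\bigcup_{j=1}^{y_{i-1}}T_{(i-1)j}\subseteq B_{i-1}$ for $i\ge 2$, and $C_0=B_0$ handles $i=1$); and (c) each $T_{ij}$ is a connected component of $G[B_i]$ and contains exactly one vertex of $D_i\subseteq A_{01}$, which is therefore active.

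Then, assuming inductively that all of $C_{i-1}$ is already active, I would show that each component $T_{ij}$ becomes fully active. Let $v_0$ be the vertex of $D_i\cap T_{ij}$; for an arbitrary $u\in T_{ij}$, pick a path $v_0=w_0,w_1,\ldots,w_k=u$ lying inside $T_{ij}$ (possible since $T_{ij}$ is connected) and run the inner induction on $t$: $w_0$ is active, and if $w_{t-1}$ is active then $w_t$ has the two \emph{distinct} active neighbors $w_{t-1}$ (active by the inner induction; it lies in $B_i$) and some vertex of $C_{i-1}$ (active by the outer induction; it lies outside $B_i$ by (b)), so since the threshold is $r=2$ the vertex $w_t$ becomes active. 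Hence all of $T_{ij}$ becomes active; as this holds for every $j\in\{1,\ldots,y_i\}$, the set $\bigcup_{j=1}^{y_i}T_{ij}$ is activated, and in particular so is $C_i\subseteq\bigcup_{j=1}^{y_i}T_{ij}$. This closes the outer induction and proves the proposition. The only point requiring any care is fact (b): without the disjointness $B_i\cap C_{i-1}=\emptyset$ one could not guarantee that the two exhibited active neighbors of each path vertex are distinct, and the active-neighbor count could drop to $1$; everything else is a direct unwinding of the Stage~I definitions.
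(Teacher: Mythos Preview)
Your proof is correct and follows the same induction as the paper's own proof. The paper's version is terser---it simply asserts that ``activating in addition the vertex $v$, where $\{v\}=D_i\cap T_{ij}$, infects all of $T_{ij}$''---whereas you spell out the path-following argument and, in particular, make explicit via fact~(b) that the active neighbor in $C_{i-1}$ and the active predecessor $w_{t-1}\in B_i$ are distinct, a point the paper leaves implicit.
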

\begin{proof}
We prove by induction that activating $D_0\cup\ldots D_i$ infects $\bigcup_{j=0}^iC_j$. Induction basis follows from the definition of $C_0,D_0$. For the induction step, assume that $C_{i-1}$ is already infected. Recall that each vertex in $T_{i1},\ldots,T_{iy_i}$ has a neighbor in $C_{i-1}$ by the definition of $B_i$. Activating in addition the vertex $v$, where $\{v\}=D_i\cap T_{ij}$, infects all of $T_{ij}$, implying that all of $C_i$ gets infected.
\end{proof}
\bigskip

Let us now estimate the probability of failure of each round of Stage I, and see what it delivers assuming its success. Notice first that for all $i \in [l]$ we have that $|C_i|\le \frac{n}{d^2}$, $|B_i|\le \frac{d}{2}|C_{i-1}|$, implying that during Stage I the union $B_0\cup B_1\cup\ldots$ always has cardinality at most $(\ell+1)\cdot d\cdot n/d^2\le n/10$.

Next observe that if there does not exist an index $i \in [\ell]$ for which a failure occurs in steps $i1,i2$, then by the definition of the algorithm above we have that for every $i$ the following equalities hold : $|C_i|=\frac{n}{d^22^{\ell-i}}$ and $|B_i|=\frac{n}{d\cdot 2^{\ell-i+2}}$ which implies that, $|D_i|\le \frac{n}{d^22^{\ell-i}s_i}$ holds for every $i \in [\ell]$ as well.  Indeed, observe that if $x_i<\frac{n}{d^22^{\ell-i}s_i}$, then the union of $T_{ij}$ is the whole set $B_i$, and thus all of $B_i$ will be infected. Otherwise $y_i=\frac{n}{d^22^{\ell-i}s_i}$. If the $y_i$ largest components of $G[B_i]$ do not contain all vertices in components of size at least $s_i$, then $|T_{ij}|\ge s_i$ for $j=1,\ldots,y_i$, implying $|\bigcup_{j=1}^{y_i}T_{ij}|\ge \frac{n}{d^22^{\ell-i}}$; in the opposite case we also have the same outcome. Thus Step (i2), if successful, results indeed in a subset $C_i$ of cardinality $|C_i|=\frac{n}{d^22^{\ell-i}}$, as declared.
\begin{proposition}\label{prop:first}
With high probability there is no $i \in [\ell]$ such that step $i1$ fails.
\end{proposition}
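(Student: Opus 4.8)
The plan is to fix an index $i\in[\ell]$, condition on everything the algorithm has exposed up to the moment Step~$(i1)$ begins, and bound the conditional probability that Step~$(i1)$ fails by $\exp\bigl(-\Omega(n/(d\log d))\bigr)$; a union bound over the $\ell=\log\log d$ values of $i$ then gives the claim, since $n/(d\log d)\to\infty$ under the hypothesis on $d$ (which in particular forces $d^2\log d\ll n$), so that $\ell\cdot\exp(-\Omega(n/(d\log d)))=o(1)$.

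The first point is an edge-exposure bookkeeping observation. By construction $C_0=B_0$ and, for $i\ge 2$, $C_{i-1}\subseteq B_{i-1}\subseteq\bigcup_{j=0}^{i-1}B_j$, so $C_{i-1}$ is disjoint from $W:=V\setminus\bigcup_{j=0}^{i-1}B_j$. Moreover, through the end of Step~$((i-1)2)$ the algorithm has exposed only edges with \emph{both} endpoints in $\bigcup_{j=0}^{i-1}B_j$, so no edge between $C_{i-1}$ and $W$ has been queried. Hence, conditioned on the history up to the start of Step~$(i1)$ --- which fixes the sets $C_{i-1}$ and $W$, with $|W|\ge 9n/10$ by the cardinality bound $|\bigcup_j B_j|\le n/10$ noted above --- the edges between $C_{i-1}$ and $W$ are still independent $\mathrm{Ber}(p)$ random variables.

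Now Step~$(i1)$ fails precisely when $|N(C_{i-1})\cap W|<\tfrac{d}{2}|C_{i-1}|$. Let $X=\sum_{w\in W}\mathbf{1}\{w\text{ has a neighbour in }C_{i-1}\}$. For distinct $w\in W$ the relevant edge sets $\{\{w,c\}:c\in C_{i-1}\}$ are disjoint, so these indicators are mutually independent, each equal to $1$ with probability $1-(1-p)^{|C_{i-1}|}\ge |C_{i-1}|p\bigl(1-|C_{i-1}|p/2\bigr)\ge\bigl(1-\tfrac1{2d}\bigr)|C_{i-1}|p$, where we used $|C_{i-1}|p\le (n/d^2)(d/n)=1/d$. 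Therefore $\mathbb{E}[X]\ge\tfrac{9}{10}\bigl(1-\tfrac1{2d}\bigr)d|C_{i-1}|\ge\tfrac45 d|C_{i-1}|$ for $d$ large, and since then $\tfrac{d}{2}|C_{i-1}|\le\tfrac58\mathbb{E}[X]$, Lemma~\ref{lemma:Chernoff} with $\eta=3/8$ yields
\[
\Pr\!\left[X<\tfrac{d}{2}|C_{i-1}|\right]\ \le\ \Pr\!\left[X<\bigl(1-\tfrac38\bigr)\mathbb{E}[X]\right]\ \le\ \exp\!\left(-\tfrac{(3/8)^2}{2}\,\mathbb{E}[X]\right)\ =\ \exp\bigl(-\Omega(d|C_{i-1}|)\bigr).
\]
Since $d|C_{i-1}|\ge d|C_0|=n/(d\log d)$, this is $\exp(-\Omega(n/(d\log d)))$ for every realization of the history, hence also unconditionally, and the union bound over $i\in[\ell]$ completes the proof.

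The argument is essentially routine; the only step requiring care is the edge-exposure accounting in the second paragraph --- verifying that the edges between $C_{i-1}$ and $W$ are genuinely unexposed when Step~$(i1)$ starts --- but this is immediate from $C_{i-1}\subseteq\bigcup_{j\le i-1}B_j$ together with the disjointness of $W$ from that union; everything else is a direct Chernoff computation.
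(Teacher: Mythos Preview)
Your argument is correct and follows exactly the paper's approach --- a Chernoff bound on the number of neighbours of $C_{i-1}$ in $W=V\setminus\bigcup_{j\le i-1}B_j$, giving failure probability $\exp(-\Theta(d|C_{i-1}|))=\exp(-\Theta(n/(d\,2^{\ell-i})))$, and then a union bound over $i$ --- only written out with more care than the paper's two-line version.

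One inaccuracy in your edge-exposure bookkeeping is worth flagging. The claim that through Step~$((i-1)2)$ ``the algorithm has exposed only edges with both endpoints in $\bigcup_{j\le i-1}B_j$'' is false: in Step~$(j1)$ one exposes \emph{all} edges from $C_{j-1}$ into $V\setminus\bigcup_{k\le j-1}B_k$, and many of those go to vertices that end up in the eventual $W$. What you actually need --- that no edge between $C_{i-1}$ and $W$ has been queried --- is nevertheless true, for a slightly different reason: every edge exposed in some Step~$(j1)$ with $j\le i-1$ has one endpoint in $C_{j-1}\subseteq B_{j-1}$, and both $C_{i-1}\subseteq B_{i-1}$ and $W$ are disjoint from $B_{j-1}$; every edge exposed in some Step~$(j2)$ lies inside $B_j$, which is disjoint from $W$. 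Hence the pairs $\{c,w\}$ with $c\in C_{i-1}$, $w\in W$ are indeed fresh, and your Chernoff computation goes through unchanged.
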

\begin{proof}
For Step (i1), the probability that $C_{i-1}$ has less than $\frac{d}{2}|C_{i-1}|$ neighbors outside of $\bigcup_{j=0}^{i-1}B_j$ is bounded from above by
\begin{gather*}
Pr\left[\Bin\left(\frac{9n}{10},1-\left(1-\frac{d}{n}\right)^{|C_{i-1}|}\right)\le \frac{d|C_{i-1}|}{2}\right]=
\exp\left\{-\Theta(d|C_{i-1}|)\right\}\\
=\exp\left\{-\Theta\left(\frac{n}{d\cdot 2^{\ell-i}}\right)\right\}\,,
\end{gather*}
and the sum of these estimates for $i=1,\ldots,\ell$ is obviously $o(1)$.
\end{proof}
\begin{proposition}\label{prop:second}
With high probability, there is no $i \in [\ell]$ such that step $i2$ fails.
\end{proposition}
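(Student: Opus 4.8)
The plan is to reduce Proposition~\ref{prop:second} to a single high-probability statement about a subcritical binomial random graph, and then to dispatch the $\ell=\log\log d$ iterations with a union bound. Fix $i\in[\ell]$ and condition on the success of all earlier steps — of steps $j1$ and $j2$ for $j<i$, and of step $i1$; it is enough to bound the conditional probability that step $(i2)$ then fails, since these events (as $i$ ranges over $[\ell]$), together with the step-$(i1)$ failures already controlled by Proposition~\ref{prop:first}, account for every way Stage~I can fail. Under this conditioning $|B_i|=\frac{n}{d\,2^{\ell-i+2}}=:n_0$. The structural observation I would use is that no edge with both endpoints in $B_i$ has been revealed up to this point: each prior exposure was either between $C_{j-1}\subseteq\bigcup_{k<j}B_k$ and its complement, or among the vertices of some $B_j$ with $j<i$, and $B_i\subseteq V\setminus\bigcup_{k\le i-1}B_k$ lies entirely on the complementary side of the former and is disjoint from the latter. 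Consequently, conditionally $G[B_i]\sim G(n_0,p)$ with $n_0p=c_i$ where $c_i:=2^{\,i-\ell-2}$. By the deterministic case analysis recorded immediately before Proposition~\ref{prop:first}, step $(i2)$ does not fail provided $G[B_i]$ has at least $\frac{n}{d^2 2^{\ell-i}}$ vertices lying in connected components of size at least $s_i$; so it suffices to establish this, for all $i$ at once, with high probability.

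For the single-$i$ claim I would apply Lemma~\ref{lemma:subcritical} to $G[B_i]$ with $k=k_i:=\lceil s_i\rceil$. When $i\le\ell-3$ we have $c_i\le 1/32<1/20$ and the lemma applies directly; for the bounded remaining set $i\in\{\ell-2,\ell-1,\ell\}$ we have $c_i\le 1/4$, still well inside the subcritical regime, and the same conclusion holds (the constant $1/20$ in Lemma~\ref{lemma:subcritical} is not essential: its proof only needs $n_0 q$ below an absolute constant, so that $1-(1-q)^{n_0/2}\ge n_0q/3$). Moreover $s_i\le s_\ell=\frac{\log d-4}{4}=O(\log d)$, and $\log n_0\ge\log n-\log d-\log\log d=\Omega(\log d)$ since $d\ll\sqrt n$, so the requirement $k=O(\log n_0)$ of Lemma~\ref{lemma:subcritical} is met. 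The lemma then yields: with probability at least $1-\exp\!\big(-\Omega\big(\tfrac{n_0}{k_i}(c_i/3)^{k_i-1}\big)\big)$, the number of vertices of $G[B_i]$ in components of size at least $k_i\ge s_i$ is at least $(c_i/3)^{k_i-1}\,n_0/4$. The value of $s_i$ is chosen precisely so that this exceeds $\frac{n}{d^2 2^{\ell-i}}$: since $k_i-1\le s_i$ and $c_i/3<1$, $(c_i/3)^{k_i-1}\ge(c_i/3)^{s_i}=2^{-s_i(\log_2 3+\ell-i+2)}$, and as $\log_2 3+\ell-i+2<\ell-i+4$ one obtains $s_i(\log_2 3+\ell-i+2)\le(\log d-4)-(2-\log_2 3)s_i$, hence $(c_i/3)^{k_i-1}\ge\frac{16}{d}\,2^{(2-\log_2 3)s_i}$. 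Together with $n_0/4=\frac{n}{d\,2^{\ell-i+4}}$ this gives a vertex count of at least $2^{(2-\log_2 3)s_i}\cdot\frac{n}{d^2 2^{\ell-i}}\ge\frac{n}{d^2 2^{\ell-i}}$, as wanted.

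The final step is the union bound: the probability that step $(i2)$ fails for some $i\in[\ell]$ is at most $\sum_{i=1}^{\ell}\exp\!\big(-\Omega\big(\tfrac{n_0}{k_i}(c_i/3)^{k_i-1}\big)\big)$, and by the computation above the $i$-th exponent is $\Omega\!\big(2^{(2-\log_2 3)s_i}\cdot\frac{n}{d^2 2^{\ell-i}k_i}\big)$. Since $2^{\ell-i}k_i$ increases with $\ell-i$ while the slack factor $2^{(2-\log_2 3)s_i}$ decreases, the binding term is $i=1$, where $s_1=\frac{\log d-4}{\log\log d+3}=\Theta\!\big(\tfrac{\log d}{\log\log d}\big)$, $2^{\ell-1}=\tfrac{\log d}{2}$ and $k_1=\Theta(s_1)$, so the exponent there is of order $d^{\,\Omega(1/\log\log d)}\cdot\frac{n\log\log d}{d^2\log^2 d}$. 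The main obstacle — and the only place in this proof where the upper bound on $d$ in Theorem~\ref{thm:main} is used — is to verify that this quantity is $\omega(\log\log\log d)=\omega(\log\ell)$ throughout the stated range: the hypothesis $d^2\ll\frac{n\log\log n}{\log^2 n}$ gives only $\frac{n}{d^2\log^2 d}\gg\frac{\log^2 n}{\log\log n\,\log^2 d}$, which degenerates to a constant when $\log d$ is of the same order as $\log n$, so the super-polylogarithmic factor $d^{\Omega(1/\log\log d)}=2^{\Omega(\log d/\log\log d)}$ genuinely carries the argument there. Granting this, every term is $o(1/\ell)$, the sum is $o(1)$, and the proposition follows.
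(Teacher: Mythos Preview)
Your proof is correct and follows the same skeleton as the paper's: observe that $G[B_i]$ is a fresh binomial random graph, apply Lemma~\ref{lemma:subcritical} with $k=s_i$, verify that the resulting count of vertices in large components exceeds $\frac{n}{d^22^{\ell-i}}$, and finish with a union bound over $i\in[\ell]$.

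The one substantive difference is in how the union bound is discharged. The paper replaces $c_i/3$ by the cruder $2^{-(\ell-i+4)}$, obtaining an exponent of the form $K\cdot\frac{j}{2^j}$ with $K=\frac{4n}{d^2\log d}$ (here $j$ plays the role of $\ell-i$), and then exploits that the summands $\exp(-Kj/2^j)$ grow super-geometrically, so the whole sum is $\Theta$ of its largest term $\exp(-K\ell/2^{\ell})$; since $K\gg\frac{\log n}{\log\log n}$ this is $o(1)$ with no need for an $\ell$-factor. You instead keep the sharper estimate $(c_i/3)^{s_i}=\frac{16}{d}\,2^{(2-\log_23)s_i}$ and use the crude bound $\ell\cdot\max_i$, with the slack $2^{(2-\log_23)s_1}=d^{\Omega(1/\log\log d)}$ absorbing the extra $\ell$. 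Both work; the paper's geometric-sum observation is slightly cleaner, while your argument is more careful about the hypothesis $c<1/20$ of Lemma~\ref{lemma:subcritical} (which the paper applies without comment even though $c_\ell=1/4$).
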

\begin{proof}
Apply Lemma~\ref{lemma:subcritical} with parameters
$$
n_0=|B_i|=\frac{n}{d\cdot 2^{\ell-i+2}},\quad q=\frac{d}{n}=\frac{1}{2^{\ell-i+2}n_0},\quad k=s_i\,.
$$
We derive that with probability $1-\exp\left\{-\Omega\left(\frac{n_0}{s_i}\left(\frac{1}{2^{\ell-i+4}}\right)^{s_i}\right)\right\}$
the set $B_i$ has at least $\left(\frac{1}{3\cdot2^{\ell-i+2}}\right)^{s_i}\frac{n}{d\cdot 2^{\ell-i+4}}$ vertices in connected components of size at least $s_i$. The (absolute value of the) exponent in the exceptional probability above can be estimated as follows:
\begin{eqnarray*}
\frac{n_0}{s_i}\left(\frac{1}{2^{\ell-i+4}}\right)^{s_i}&=&\frac{n(\ell-i+4)}{2^{\ell-i+2}d(\log d-4)}\cdot
2^{-\frac{(\ell-i+4)(\log d-4)}{\ell-i+4}}\\
&\ge& \frac{4n}{d^2\log d}\cdot\frac{\ell-i}{2^{\ell-i}}\,.
\end{eqnarray*}
Set $K:=\frac{4n}{d^2\log d}$, and observe that the requirement $d \ll \left(\frac{n\log\log n}{\log^2 n}\right)^{1/2}$ implies that $K \gg \frac{\log n}{\log\log n}$.
By the calculations above, we can upper bound the probability there is failure in one of the rounds by
$$\sum_{j=1}^{\ell}\exp\left\{-\frac{K\cdot j}{2^j}\right\}.$$ Denoting the $j$th summand by $f(j)$ we see that $\frac{f(j+1)}{f(j)}=\exp(\frac{K}{2^j}\frac{j-1}{2})$ which is
super-constant for all $1 \le j \le \ell = \log\log d$ and $n$ large enough. Therefore $\sum_{j=1}^{\ell}f(j)=\Theta(f(\ell))=\exp\left\{-\Theta(\frac{n}{d^2\log d}\cdot\frac{\log\log d}{\log d})\right\}$.
Hence recalling our assumed upper bound on $d(n)$, the union bound implies that except for probability $o(1)$, step $(i2)$ is completed for every $i \in [\ell]$. Given that there are no failures in step $(i2)$, the number of vertices of $B_i$ in components of size at least $s_i$ is at least
\begin{eqnarray*}
\left(\frac{1}{3\cdot2^{\ell-i+2}}\right)^{s_i}\frac{n}{d\cdot 2^{\ell-i+4}}
&\ge& \left(\frac{1}{2^{\ell-i+4}}\right)^{s_i}\frac{n}{d\cdot 2^{\ell-i+4}}=2^{-\log d+4}\cdot \frac{n}{d\cdot 2^{\ell-i+4}}\\
&=&\frac{n}{d^2\cdot 2^{\ell-i}}\,.
\end{eqnarray*}
\end{proof}

Now we estimate the size of the set $A_{01}=\bigcup_{i=0}^{\ell}D_i$. Recall that $|D_0|=\frac{n}{d^2\log d}$, and using Propositions~\ref{prop:first} and \ref{prop:second} we get that with probability $1-o(1)$,
$|D_i|\le\frac{n}{d^22^{\ell-i}s_i}=\frac{n(\ell-i+4)}{d^22^{\ell-i}(\log d-4)}$. It thus follows that
\begin{eqnarray*}
|A_{01}|&=&\left|\bigcup_{i=0}^{\ell}D_i\right|\le \frac{n}{d^2\log d}+\frac{n}{d^2(\log d-4)}\sum_{i=1}^{\ell}\frac{\ell-i+4}{2^{\ell-i}}\\
&=&\frac{n}{d^2\log d}+\frac{n}{d^2(\log d-4)}\left[4\sum_{i=1}^{\ell}2^{-\ell+i}+\sum_{i=1}^{\ell}\frac{\ell-i}{2^{\ell-i}}\right]\,.
\end{eqnarray*}
Obviously $\sum_{i=1}^{\ell}2^{-\ell+i}<2$. Also, $\sum_{i=1}^{\ell}\frac{\ell-i}{2^{\ell-i}}\le \sum_{j=1}^\infty \frac{j}{2^j} = \sum_{i=1}^{\infty} \sum_{j=i}^{\infty} \frac{1}{2^j} = \sum_{i=1}^{\infty}  \frac{2}{2^i} = 2$.
Altogether,
$$
|A_{01}|=\left|\bigcup_{i=0}^{\ell}D_i\right|\le \frac{n}{d^2\log d}+\frac{n}{d^2(\log d-4)}(8+2)\le \frac{12n}{d^2\log d}\,.
$$

To complete the analysis of Stage I observe that assuming it was successful, the set $C_{\ell}$ is infected, and no edges between $C_{\ell}$ and $V-\bigcup_{i=0}^{\ell}B_i$ and inside $V-\bigcup_{i=0}^{\ell}B_i$ have been exposed.

\bigskip

{\bf Stage II.} Denote $G_2=G[C_{\ell}\cup\left(V-\bigcup_{i=0}^{\ell}B_i\right)]$. We can view $G_2$ as a random graph with edge probability $p$, in which the initial seed $C_{\ell}$ of size $|C_{\ell}|=\frac{n}{d^2}$ is activated. Then according to Theorem~\ref{thm:Janson}, w.h.p. all but $O(nde^{-d/2})<\frac{n}{4}$ vertices of $G_2$ will be infected. Recalling that $\left|\bigcup_{i=0}^{\ell} B_i\right|\le \frac{n}{10}$, we arrive at the conclusion that w.h.p. after Stage II at least $n/2$ vertices of $G$ are infected, when activating the initial seed $A_{01}$.

\bigskip

{\bf Stage III.} According to Lemma~\ref{lemma:partial_infec} above, the random graph $G\sim G(n,p)$ is w.h.p. such that activating any set of size $n/2$ results in all but at most $n/d^3$ vertices being infected. Apply this Lemma to the outcome of Stage II, and denote by $A_{02}$ the set of non-infected vertices, w.h.p $|A_{02}|\le n/d^3$. Define
$$
A_0=A_{01}\cup A_{02}\,,
$$
then $|A_0|\le \frac{12n}{d^2\log d}+\frac{n}{d^3}<\frac{13n}{d^2\log d}$, and $\langle A_0\rangle=[n]$.

\subsection{Lower bound}

In our analysis, we shall include two parameters $\alpha$ and $\beta$ that can simultaneously be optimized to give the best possible lower bound provable with our current approach. For simplicity of the presentation, rather than optimizing $\alpha$ and $\beta$, we shall fix $\alpha = 3$ and $\beta = 2 - \frac{1}{\log d}$.

Let $G$ be a random graph sampled from $G(n,p)$. Let $t = \frac{n}{\alpha d^2}$. We assume that $d$ is bounded from below by some sufficiently large constant (that can be computed explicitly from the proof of Lemma~\ref{lem:density}), and bounded from above by $o(\sqrt{n})$.

\begin{lemma}
\label{lem:density}
For the setting above, w.h.p. $G$ does not have a subgraph with $t = \frac{n}{\alpha d^2}$ vertices and $\beta t$ edges, where $\alpha = 3$ and $\beta = 2 - \frac{1}{\log d}$.
\end{lemma}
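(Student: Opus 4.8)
The plan is to use a first-moment (union bound) argument over all candidate vertex subsets of size $t$. For a fixed set $S$ of $t$ vertices, the number of edges inside $S$ is distributed as $\Bin\!\left(\binom{t}{2}, p\right)$, which has mean $\mu := \binom{t}{2}p \approx \frac{t^2 p}{2} = \frac{t \cdot tp}{2}$. Since $t = \frac{n}{\alpha d^2}$ and $p = \frac{d}{n}$, we get $tp = \frac{1}{\alpha d}$, so $\mu \approx \frac{t}{2\alpha d}$, which is much smaller than $\beta t$ (the latter is of order $t$, while $\mu$ is of order $t/d$). Thus we are asking for the probability that a $\Bin$ random variable exceeds its mean by a multiplicative factor of roughly $2\alpha\beta d$ — a large deviation. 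First I would invoke the standard upper tail bound for the binomial: for $k = \beta t$,
\[
\Pr\!\left[\Bin\!\left(\tbinom{t}{2},p\right) \ge \beta t\right] \le \binom{\binom{t}{2}}{\beta t} p^{\beta t} \le \left(\frac{e \binom{t}{2} p}{\beta t}\right)^{\beta t} \le \left(\frac{e t p}{2\beta}\right)^{\beta t} = \left(\frac{e}{2\alpha\beta d}\right)^{\beta t}.
\]

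Next I would take the union bound over all $\binom{n}{t} \le \left(\frac{en}{t}\right)^t = (e\alpha d^2)^t$ choices of $S$, giving
\[
\Pr[\exists\, S] \le (e\alpha d^2)^t \cdot \left(\frac{e}{2\alpha\beta d}\right)^{\beta t} = \left[ e\alpha d^2 \cdot \left(\frac{e}{2\alpha\beta d}\right)^{\beta} \right]^{t}.
\]
To show this tends to $0$ it suffices to show the bracketed base is bounded below $1$ (or even just that the whole expression is $o(1)$, using that $t \to \infty$). Taking logarithms (base $2$, say, consistent with the paper's convention), the exponent of $d$ inside the bracket is $2 - \beta = \frac{1}{\log d}$, and $d^{1/\log d} = 2$ is a constant; the remaining constant-and-$d$-independent factors are $e\alpha \cdot \left(\frac{e}{2\alpha\beta}\right)^{\beta}$, a fixed number. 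So the base is $\Theta(1)$, and one checks that for $\alpha = 3$, $\beta = 2 - \frac{1}{\log d}$ (so $\beta \ge 3/2$ once $d$ is a large enough constant), and $d$ sufficiently large, this constant is strictly less than $1$; here the slack $2 - \beta = \frac{1}{\log d}$ is exactly what converts the dangerous $d^2$ from the entropy term $\binom{n}{t}$ into the harmless constant $d^{2-\beta} = 2$. Finally, since $t = \frac{n}{\alpha d^2} \to \infty$ under the assumption $d = o(\sqrt{n})$, a base bounded by a constant $<1$ raised to the power $t$ gives a bound that is $o(1)$, completing the proof.

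The main obstacle — really the only delicate point — is getting the constants to line up: we must verify that $e\alpha \cdot \left(\frac{e}{2\alpha\beta}\right)^{\beta} \cdot 2 < 1$ for the chosen $\alpha = 3$ and for $\beta$ close to (but at most) $2$, once $d$ exceeds an absolute constant. This is where the freedom to take $d$ large (and the specific numeric choices $\alpha=3$, $\beta \approx 2$) is used, and it is worth doing this arithmetic carefully since it pins down the "sufficiently large constant" lower bound on $d$ referenced in the lemma statement. Everything else — the binomial tail estimate and the $\binom{n}{t}$ bound — is routine. One should also remark that it is enough to forbid subgraphs with \emph{exactly} $t$ vertices and \emph{at least} $\beta t$ edges, since any denser or larger configuration contains such a subgraph; this is the form in which the lemma will be applied to the contagion lower bound, where the first $t - t_0$ infected vertices each contribute $\ge r = 2$ edges back into the growing active set.
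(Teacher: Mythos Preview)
Your proposal is correct and is essentially the same argument as the paper's proof: both use a union bound over the $\binom{n}{t}\le (e\alpha d^2)^t$ vertex sets, bound the probability of $\ge \beta t$ internal edges by $\binom{\binom{t}{2}}{\beta t}p^{\beta t}\le \left(\frac{etp}{2\beta}\right)^{\beta t}$, and arrive at the identical base $\frac{e^{\beta+1}d^{2-\beta}}{\alpha^{\beta-1}2^{\beta}\beta^{\beta}}$, which for $\alpha=3$ and $\beta=2-\frac{1}{\log d}$ evaluates to roughly $\frac{e^3}{24}<1$. The numerical check you flag as the only delicate point is exactly what the paper does as well.
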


\begin{proof}
There are ${n \choose t}\le (e\alpha d^2)^t$ possible choices of a set $T$ of $t$ vertices in $G$. There are ${{t \choose 2} \choose \beta t} \le (\frac{et}{2\beta})^{\beta t}$ ways of choosing $\beta t$ edge locations in $T$. The probability that all these choices are indeed edges is $\left(\frac{d}{n}\right)^{\beta t} = (\frac{1}{\alpha d t})^{\beta t}$. Hence the probability that $G$ has a subgraph with $t$ vertices and $\beta t$ edges is upper bounded by:

$$(e\alpha d^2)^t \left(\frac{et}{2\beta}\right)^{\beta t} \left(\frac{1}{\alpha d t}\right)^{\beta t} = \left(\frac{e^{\beta + 1}d^{2 - \beta}}{\alpha^{\beta - 1}2^{\beta}\beta^{\beta}}\right)^t.$$

Now in the exponent for $d$ substitute $\beta = 2 - \frac{1}{\log d}$, obtaining $d^{2 - \beta} = 2$. For the other terms we can substitute an approximation $\beta \simeq 2$, because for sufficiently large $d$, the error introduced by this is offset by our choice of $\alpha$ that is larger than needed for the proof. The expression $\frac{e^{\beta + 1}d^{2 - \beta}}{\alpha^{\beta - 1}2^{\beta}\beta^{\beta}}$ is then roughly $\frac{2e^3}{16\alpha}$ and is strictly smaller than~1 for $\alpha = 3$.
Raising to the power of $t$, the probability tends to~0 as $n$ grows.
\end{proof}

\begin{corollary}
For the parameters as above, $m(G,2) > \frac{n}{6 d^2 \log d}$ w.h.p.
\end{corollary}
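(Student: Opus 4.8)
The plan is to prove the lower bound by contradiction with Lemma~\ref{lem:density}: I will show that a contagious set that is too small forces the existence of a subgraph on $t = \frac{n}{3d^2}$ vertices with at least $\beta t$ edges (where $\beta = 2 - \frac{1}{\log d}$), which w.h.p. does not exist in $G$ in the relevant range of $d$.

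First I would set up the reduction. Suppose $m(G,2) = t_0 \le \frac{n}{6d^2\log d}$; since $\frac{n}{6d^2\log d} \ll t$, we may assume $t_0 < t$ (otherwise the claimed bound is immediate). Fix a contagious set $A_0$ with $|A_0| = t_0$ and run the activation process. Since $\langle A_0\rangle = [n]$, exactly $n - t_0 \ge t - t_0$ vertices become infected. Order the infected vertices by generation, breaking ties within a generation arbitrarily, and let $T$ be the union of $A_0$ with the first $t - t_0$ infected vertices in this order, so that $|T| = t$.

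Next I would lower-bound $|E(G[T])|$. Each infected vertex $v \in T$ joined some generation $i \ge 1$, and therefore had at least two active neighbors in $A_{i-1}$. Those neighbors were activated at generations strictly smaller than $i$, hence they are either seeds (all of which lie in $T$) or infected vertices from strictly earlier generations; the latter also lie in $T$, because the first $t - t_0$ infected vertices are collected generation by generation, so the set $T$ is closed under taking earlier-generation infected vertices. Assigning to each such $v$ two of the corresponding edges to its witnessing neighbors, and observing that every such edge has $v$ as its later endpoint (in the generation order), the assignment is injective: no edge is charged to two vertices. This produces at least $2(t - t_0)$ distinct edges inside $T$. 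Finally, the arithmetic: since $t = \frac{n}{3d^2}$ and $t_0 \le \frac{n}{6d^2\log d} = \frac{t}{2\log d}$, we get $2(t - t_0) \ge 2t - \frac{t}{\log d} = \bigl(2 - \frac{1}{\log d}\bigr)t = \beta t$. Thus $G$ contains a subgraph with $t$ vertices and at least $\beta t$ edges (delete edges down to $\lceil \beta t\rceil$ if needed), contradicting Lemma~\ref{lem:density}, which holds w.h.p. since the range of $d$ in Theorem~\ref{thm:random} satisfies both $d \to \infty$ and $d = o(\sqrt n)$. Hence w.h.p. $m(G,2) > \frac{n}{6d^2\log d}$.

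The only delicate point is the bookkeeping in the middle step: one must check that the two ``witness'' edges per infected vertex really lie inside $T$ (which relies on witnesses coming from strictly earlier generations, so that taking the first $t - t_0$ infected vertices in generation order yields a set closed under this relation) and that orienting each edge toward its later endpoint gives an injective charging, so the edges are counted without duplication. Integrality issues ($t$ and $\beta t$ need not be integers) are cosmetic and are absorbed by the same rounding already implicit in Lemma~\ref{lem:density}.
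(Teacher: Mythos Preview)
Your proof is correct and follows essentially the same approach as the paper: assume a contagious set of size at most $\frac{n}{6d^2\log d}$, take it together with the first $t - t_0$ infected vertices to form a set of size $t = \frac{n}{3d^2}$, and observe that this set spans at least $2(t-t_0) = \beta t$ edges, contradicting Lemma~\ref{lem:density}. You have simply spelled out more carefully the bookkeeping (witnesses lie in strictly earlier generations, the charging of edges is injective) that the paper leaves implicit.
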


\begin{proof}
Suppose otherwise. Then for $t = \frac{n}{3 d^2}$, the set of $t_0 = \frac{n}{6 d^2 \log d}$ seeds and first $t - t_0$ infected vertices induces a subgraph with $t$ vertices and $2(t - t_0) = (2 - \frac{1}{\log d})t$ edges, contradicting Lemma~\ref{lem:density}.
\end{proof}

\section{The asymptotic threshold for $m(G,2)=2$}\label{sec:thresholdtwo}

\begin{lemma}\label{lemma:threshold}
Let $p < \frac{c}{\sqrt{n \log n}}$ for some sufficiently small $c>0$. Then with high probability, $m(G,2)>2$.
\end{lemma}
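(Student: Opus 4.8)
The plan is a first-moment (union bound) argument: I will show that the expected number of contagious pairs tends to $0$ when $p < c/\sqrt{n\log n}$. A pair $\{u,v\}$ is contagious iff activating it eventually activates all of $[n]$. The starting observation is that if $\{u,v\}$ is contagious and we run the process, then the set $\langle\{u,v\}\rangle = [n]$ together with the order in which vertices get infected exhibits a combinatorial certificate: essentially, every vertex other than $u,v$ must acquire two neighbors among the already-active vertices. Concretely, if we keep only the first two activating edges of each infected vertex we obtain a spanning subgraph $F$ on $[n]$ in which $u,v$ have degree $0$ (toward the rest of the certificate) and every other vertex has in-degree exactly $2$ in the natural orientation; hence $F$ has exactly $2(n-2)$ edges, and moreover $F$ is connected after contracting $\{u,v\}$ (every vertex is reachable from $\{u,v\}$ along increasing-time paths). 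The key structural point I will extract: the graph $G$ restricted to $[n]\setminus\{u,v\}$ must contain a subgraph with $n-2$ vertices and at least $2(n-2) - O(1)$ edges, or, said more simply, $G$ must contain some set $T$ with $|T| = k$ and at least $2k - 2$ edges for \emph{every} $k$ along the way — in particular for $k$ around a constant times $\sqrt{n}$ one already gets a contradiction, exactly as in Lemma~\ref{lem:density} and its corollary (which is the $t_0 = 2$ case announced in the overview).

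So the cleanest route is: first fix a small target size $k = k(n)$ (I expect $k = \Theta(\sqrt{n\log n})$ or a bit smaller is the right scale, since that is where $np^2$ becomes relevant), and show via the first-moment bound of Lemma~\ref{lem:density}, adapted to $t_0 = 2$, that w.h.p.\ $G$ has no set of $t$ vertices spanning $2t - 2$ edges, for $t$ in a suitable window. Then observe that if any pair were contagious, taking the $2$ seeds plus the first $t-2$ infected vertices would yield such a forbidden dense subgraph (the first $t-2$ infected vertices contribute $2(t-2) = 2t-4$ edges toward the earlier-active set, and the seeds may add up to $1$ more useful edge between them, giving at least $2t - 4$; choosing the window so that $2t-4$ already beats the first-moment threshold suffices — one does not even need the sharper $2t-2$). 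The arithmetic is: ${n\choose t}\binom{\binom{t}{2}}{2t-4}p^{2t-4}$, with $p = c/\sqrt{n\log n}$, must be $o(1)$; plugging in, the dominant factor is roughly $\big(e d^2 \cdot (e t)^{2} \cdot p^{2} / \text{const}\big)^{t}$ up to the $-4$ correction, and with $p^2 = c^2/(n\log n)$ and $t = \Theta(n/d^2)$ one checks the base is $<1$ for $c$ small — this is essentially a re-run of the computation in Lemma~\ref{lem:density}, now with the role of $d$ played by a function of $p$.

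The main obstacle — really the only delicate point — is handling the \emph{window of validity of $t$} and making sure the relevant $t$ is simultaneously (a) large enough that the first-moment bound is genuinely $o(1)$ and not merely $O(1)$, and (b) small enough that one can honestly guarantee the process reaches $t$ infected vertices before possibly stalling. For (b): a contagious pair by definition activates \emph{everything}, so it certainly activates the first $t-2$ vertices for any $t \le n$; thus (b) is automatic and the only real constraint is (a). For (a), when $p$ is of order exactly $1/\sqrt{n\log n}$ we have $d = \sqrt{n/\log n}$, so $t = \Theta(n/d^2) = \Theta(\log n)$, and the first-moment sum $\sum_t {n\choose t}\binom{\binom t2}{2t-4}p^{2t-4}$ over such small $t$ needs care — the ${n\choose t}$ factor of size $n^{\Theta(\log n)} = e^{\Theta(\log^2 n)}$ must be beaten by $p^{2t} \approx (n\log n)^{-\Theta(\log n)} = e^{-\Theta(\log^2 n)}$, and the constant $c$ is chosen precisely so the former loses. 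I would therefore carry it out by: (1) writing the union bound summand explicitly for general $t$; (2) showing its logarithm is $t\cdot\log(\text{base})$ with $\text{base} = O(c^2)\cdot(\text{bounded factors})$, uniformly for $t$ in the range $[\,C, \delta n/d^2\,]$ that matters; (3) choosing $c$ so the base is below $1$, so the sum is dominated by a geometric series and is $o(1)$; (4) invoking the certificate argument above to conclude no pair is contagious. The one piece of genuine content beyond Lemma~\ref{lem:density} is step (2)'s uniformity in $t$ down to constant $t$, where the $-4$ in the exponent and the $\binom{\binom t2}{2t-4}$ binomial must be bounded with a little care (for tiny $t$ one may instead bound $\binom{\binom t2}{2t-4}$ by $t^{2t}$ crudely), but nothing here is deep.
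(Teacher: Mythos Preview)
Your proposal is correct and is essentially the same argument as the paper's: a first-moment bound showing that w.h.p.\ no $t$-vertex set spans $\approx 2t$ edges for $t=\Theta(\log n)$, combined with the certificate that a contagious pair together with the first $t-2$ infected vertices would produce such a set. The paper's write-up is much terser: it simply fixes $t=\log n$, bounds $\binom{n}{2}\binom{n}{t}\binom{(t+2)^2/2}{2t}p^{2t}\le n^2(25c)^t=o(1)$, and is done---your discussion of ``windows of validity'' and your initial guess of $k=\Theta(\sqrt{n\log n})$ (which you later correct to $\Theta(\log n)$) are unnecessary detours, and the single value $t=\log n$ suffices with no sum over $t$ needed.
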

\begin{proof}
Otherwise, there are two vertices $a,b$ and a set of $t \leq n-2$ vertices disjoint from $\{a,b\}$ such that the subgraph spanned on $G[\{a,b\}\cup T]$ spans at least $2t$ edges.
The probability such a subgraph exists is upper bounded by
$${n \choose 2}{n \choose t}{(t+2)^2/2 \choose 2t}p^{2t},$$ which (for large $t$) is at most
$$n^2\left(en/t\right)^t(e(t+2)p)^{2t}\leq n^2(25c)^{t}=o(1),$$
when $t=\log n$ and $c$ is sufficiently small.
\end{proof}

\medskip

We will now prove that if $p=\frac{C}{\sqrt{n\log n}}$ and $C$ is large enough, then w.h.p. $G\sim G(n,p)$ satisfies $m(G,2)=2$.

\medskip

\begin{lemma}\label{lem:estimate}
Let $X\sim \Bin(n,p)$ with $np\le 1$. Then $Pr[X>0]>\frac{np}{2}$.
\end{lemma}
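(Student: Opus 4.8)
The plan is to work directly with the identity $Pr[X>0]=1-(1-p)^n$ and to bound its right-hand side from below by elementary inequalities. Observe first that the claim is only meaningful when $p>0$ (otherwise it reads $0>0$), and that $np\le 1$ together with $n\ge 1$ forces $p\le 1$, so every quantity appearing below is well defined.

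My preferred route is the Bonferroni (truncated inclusion--exclusion) inequality. Let $A_i$ be the event that the $i$-th of the $n$ independent Bernoulli$(p)$ trials succeeds, so that $\{X>0\}=\bigcup_{i=1}^n A_i$. Then
\[
Pr[X>0]\;\ge\;\sum_{i=1}^n Pr[A_i]\;-\;\sum_{1\le i<j\le n} Pr[A_i\cap A_j]\;=\;np-\binom{n}{2}p^2\;>\;np-\frac{(np)^2}{2},
\]
where the final strict inequality uses $\binom n2<n^2/2$ and $p>0$. Since $np-\tfrac12(np)^2=np\bigl(1-\tfrac{np}{2}\bigr)$ and $np\le 1$ gives $1-\tfrac{np}{2}\ge\tfrac12$, we conclude $Pr[X>0]>np/2$, as required. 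An alternative that avoids Bonferroni: from $(1-p)(1+p)=1-p^2\le 1$ one gets $(1-p)^n\le(1+p)^{-n}\le(1+np)^{-1}$ by Bernoulli's inequality, with the first step strict whenever $p>0$; hence $Pr[X>0]=1-(1-p)^n>1-\tfrac{1}{1+np}=\tfrac{np}{1+np}\ge\tfrac{np}{2}$, again using $np\le 1$ in the last step.

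I do not expect any genuine obstacle: this is a routine estimate. The only points that require a little care are obtaining a \emph{strict} inequality rather than merely $\ge$ --- which is exactly where the (implicit) hypothesis $p>0$ is used, via $\binom n2<n^2/2$ or via $1-p<(1+p)^{-1}$ --- and checking that the correction factor $1-\tfrac{np}{2}$ (respectively $\tfrac{1}{1+np}$) is at least $\tfrac12$, which is precisely the hypothesis $np\le 1$ and is tight at $np=1$.
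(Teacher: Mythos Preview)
Your proposal is correct, and your primary route via Bonferroni's inequality is exactly the argument the paper gives: the paper writes $Pr[X>0]\ge np-\binom{n}{2}p^2=np\bigl(1-\tfrac{(n-1)p}{2}\bigr)>\tfrac{np}{2}$, which is the same computation up to whether one factors first or bounds $\binom{n}{2}<n^2/2$ first. Your second derivation via $(1-p)^n<(1+np)^{-1}$ is a pleasant alternative that the paper does not mention.
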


\begin{proof}
By Bonferroni's Inequality,
$$
Pr[X>0]\ge np -\binom{n}{2}p^2=np\left(1-\frac{(n-1)p}{2}\right)>\frac{np}{2}\,.
$$
\end{proof}

\medskip

We expose $G(n,p)$ in two stages: $G=G_1\cup G_2$, where $G_i\sim G(n,p_i)$, $p_1=\frac{1}{\sqrt{n\log n}}$, $p_2=\frac{C_1}{\sqrt{n\log n}}$ with $C_1$ being a large enough constant, to be set later. We will argue that w.h.p. $G_1$ contains two vertices $u_1,u_2$ infecting a set $U$ of size $k=\Theta(\log n)$. Then we will use $G_2$ and Theorem~\ref{thm:Janson} to argue that if $C_1$ is large enough then with high probability the set $U$ infects all of $V$ in $G_2$ and thus in $G$.

\begin{lemma}\label{thm:intermdiate}
Let $k=c_1\log n$,
where $0<c_1<1$ is a small enough constant.  Let $G_1$ be distributed as $G\left(n,\frac{1}{\sqrt{n\log n}}\right)$. Then with high probability there are two vertices in $G_1$ that infect a set of size $k$.
\end{lemma}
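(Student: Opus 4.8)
The plan is to show that a fixed pair of vertices $\{u_1,u_2\}$ infects a set of size $k=c_1\log n$ with probability substantially larger than $(\log n)/n^2$, and then to boost this to a with-high-probability statement by a second-moment / disjointness argument over many pairs. First I would fix $u_1,u_2$ and describe a ``template'' of how a successful infection can occur: pick a target common neighbor $w_1$ of both $u_1$ and $u_2$ (this seeds the process), and then grow a path-like structure $w_1,w_2,\dots,w_k$ where each $w_{j+1}$ ($j\ge 1$) is a common neighbor of $w_j$ and of one of the previously activated vertices (say $u_1$, or $w_{j-1}$), so that $w_{j+1}$ gets two active neighbors and becomes infected. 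Exposing these edges one pair at a time, the probability that such a specific path of length $k$ exists among a fresh set of $\Theta(n)$ candidate vertices at each step is, by a computation analogous to Lemma~\ref{lemma:subcritical}, at least roughly $(n p_1^2/C)^{k-1}\cdot\Theta(np_1^2)$. With $p_1=1/\sqrt{n\log n}$ we have $np_1^2 = 1/\log n$, so this is about $(\Theta(1/\log n))^{k}$, and with $k=c_1\log n$ and $c_1$ small this is $n^{-O(c_1)}$, which for small enough $c_1$ is $\gg (\log n)/n^2$ — in fact $\gg n^{-2+\epsilon}$ for any desired $\epsilon>0$.

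Next I would turn the ``one pair succeeds with decent probability'' statement into ``some pair succeeds w.h.p.'' Let $p^\star$ be the success probability of a fixed pair, so $p^\star = n^{-2+\Omega(1)}$ by the previous paragraph. The naive union bound over all $\binom{n}{2}$ pairs gives expected number of good pairs $\binom{n}{2}p^\star = n^{\Omega(1)}\to\infty$, but positive correlations could in principle concentrate all the ``good'' events on a vanishing fraction of pairs. To rule this out I would either (i) use a second moment argument: bound $\mathrm{Var}$ of the number of good pairs by analyzing how much two pairs' infection events can correlate — since each template exposes only $O(\log n)$ edges, two vertex-disjoint pairs with vertex-disjoint candidate-sets are independent, and the number of ``overlapping'' pair-configurations is lower order; or (ii) more robustly, reveal the edges greedily so that the templates for different pairs use disjoint edge sets: process candidate pairs in sequence, and for each pair attempt to build the infecting path using only vertices not yet touched; since each attempt touches $O(\log n)$ vertices and there are $n/\mathrm{polylog}$ disjoint attempts available, and each independently succeeds with probability $p^\star = n^{-2+\Omega(1)}$, the probability that all fail is $(1-p^\star)^{n/\mathrm{polylog}} = \exp(-n^{\Omega(1)}) = o(1)$.

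The main obstacle I expect is the correlation control in the ``boosting'' step: one must be careful that the infecting structures for distinct pairs can be chosen edge-disjointly (or at least almost-independently) so that the failure probabilities genuinely multiply. The cleanest route is the greedy disjoint-exposure scheme of option (ii) above, which sidesteps variance computations entirely; here the bookkeeping to check is just that we never run out of fresh vertices, i.e. that $(\text{number of disjoint attempts})\times(\text{vertices per attempt}) = (n/\mathrm{polylog})\cdot O(\log n) = o(n)$, which holds comfortably. A secondary point requiring care is the constant $c_1$: it must be chosen small enough that $(np_1^2)^k = (\log n)^{-c_1\log n} = n^{-c_1\log\log n}$... — wait, one must note $(\log n)^{-k}$ with $k=c_1\log n$ is $n^{-c_1\ln\ln n}$, which decays faster than any power of $n$; so in fact the estimate must be arranged so that the per-step gain is a \emph{constant} fraction rather than $1/\log n$. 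This is exactly why the template should reuse the fixed high-degree vertices $u_1$ (and later $w_1$): each new $w_{j+1}$ should be a common neighbor of $w_j$ and $u_1$ among $\Theta(n/\log n)$ fresh candidates whose expected number is $\Theta(n\cdot p_1^2\cdot (n/\log n)/n)$... I would instead grow the structure so that at each step we look for a vertex adjacent to two \emph{already-infected} vertices out of a pool that itself grows, making the per-step survival probability bounded below by a constant; with $\Theta(\log n)$ steps this yields the required $n^{-\Theta(1)}$ bound. Getting this growth rate right — constant per-step probability rather than $o(1)$ — is the real technical heart of the argument.
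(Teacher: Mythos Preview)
Your overall architecture matches the paper's: run $\Theta(n/k)$ disjoint attempts, each using fresh vertices, and show each attempt succeeds with probability $\gg k\log n/n$ so that one succeeds w.h.p. Your option (ii) is exactly what the paper does, and your bookkeeping that each attempt consumes $O(\log n)$ vertices is correct.

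The gap is in the single-attempt probability estimate, which you correctly flag as ``the real technical heart'' but do not resolve. Your claim that one can arrange the per-step survival probability to be bounded below by a constant is false, and pursuing it will not work. Here is what actually happens. In an attempt, after $u_1,u_2$ are chosen, partition the remaining fresh vertices into $k-2$ pools of size $\Theta(n/k)$, and at step $j$ (for $j=3,\dots,k$) look in pool $j-2$ for a vertex with at least two neighbors in the current infected set $\{u_1,\dots,u_{j-1}\}$. A single candidate succeeds with probability at least $\binom{j-1}{2}p_1^2/2 \ge (j-2)^2p_1^2/4$, so the pool contains one with probability at least $\Theta(n/k)\cdot (j-2)^2p_1^2 = \Theta\!\left(\frac{(j-2)^2}{k\log n}\right)$. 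This is \emph{not} bounded below by a constant; for small $j$ it is $O(1/\log^2 n)$. The point is that the \emph{product} over $j$ is large enough:
\[
\prod_{j=3}^{k}\Theta\!\left(\frac{(j-2)^2}{k\log n}\right)
=\Theta\!\left(\frac{((k-2)!)^2}{(k\log n)^{k-2}}\right)
\ge \left(\frac{(k-2)^2}{e^2 k\log n}\right)^{k-2}
\ge \left(\frac{c_1}{200}\right)^{c_1\log n},
\]
using Stirling and $k=c_1\log n$. For $c_1$ small this exceeds $2k\log n/n$, which is all you need for the $n/(2k)$ independent attempts to succeed w.h.p. The factorial $((k-2)!)^2$ coming from the growing target set is the entire mechanism; without it the product is $(np_1^2/k)^{k}=n^{-\Theta(\log\log n)}$, as you observed. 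So do not look for a scheme with constant per-step probability --- instead, exploit that the target set for the two required edges grows linearly in $j$, giving a $j^2$ factor whose product is $(k!)^2$.
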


\begin{proof}
Initialize $V_0=V=[n]$. We describe an algorithm that has at most $\frac{n}{2k}$ iterations, indexed by $i=1, \ldots, \frac{n}{2k}$. Every iteration has at most $k-2$ steps, indexed by $j=3,\ldots, k$. We now describe iteration $i$.

Let $u_1,u_2$ be arbitrary vertices of $V_0$. For simplicity of the proof (and at the expense of requiring a smaller constant $c_1$ in the statement of the lemma), partition $V_0-\{u_1,u_2\}$ into $k-2$ sets $U_{i,1},\ldots,U_{i,k-2}$, each of size at least $\lfloor\frac{|V_0|-2}{k-2}\rfloor$. For $j=3,\ldots, k$, if there is a vertex $v_j\in U_{i,j-2}$ with at least two neighbors in $\{u_1,u_2,\ldots,u_{j-1}\}$, then set $u_j:=v_j$. Otherwise, abort iteration $i$, dump $\{u_1,\ldots,u_{j-1}\}$, update $V_0:=V_0-\{u_1,\ldots,u_{j-1}\}$, and move to iteration $i+1$.

Observe crucially that during iteration $i$ we have only exposed edges of $G_1$ touching $\{u_1,\ldots,u_{j-1}\}$, so the rest (i.e. edges whose both endpoints belong to $V_0$) are not exposed and fully retain their randomness. Also, at each iteration we dump less than $k$ vertices; since we perform at most $\frac{n}{2k}$ iterations, the size of $V_0$ is always at least $n/2$.

Let us now estimate the probability that the $i$-th iteration succeeds. When looking for $u_j$ inside this iteration,
the probability that such a vertex is found is at least the probability that there is a vertex in a set of size $n/2k$ having at least 2 neighbors in a set of size $j-1$, where the edge probability is $p_1$. The probability for a given vertex of $U_{i,j}$ to have at least two neighbors in the set of size $j-1$ can be estimated from below by $\binom{j-1}{2}\frac{p_1^2}{2}>\frac{(j-2)^2p_1^2}{4}$. Thus the probability that there is a required vertex in $U_{i,j}$ is at least $Pr[\Bin\left(\frac{n}{2(k-2)},\frac{(j-2)^2p_1^2}{4}\right)>0]$, and the latter is at least $\frac{n(j-2)^2p_1^2}{16k}$ by Lemma~\ref{lem:estimate}, as $\frac{n}{2(k-2)}\cdot \frac{(j-2)^2p_1^2}{4}<1$ (recall we assume $k=c_1\log n$ where $c_1$ is sufficiently small); this estimate is valid independently of what happened in the current iteration. Thus the probability that the $i$-th iteration succeeds is at least (using Stirling's approximation)
$$
\prod_{j=3}^k \frac{n(j-2)^2p_1^2}{16k}=\left(\frac{np_1^2}{16k}\right)^{k-2}\cdot ((k-2)!)^2
\ge \left(\frac{np_1^2(k-2)^2}{16e^2k}\right)^{k-2}\ge \left(\frac{np_1^2k}{200}\right)^k\,.
$$
Substituting the expressions for $k$ and $p_1$, we obtain that the above expression is at least $(c_1/200)^{c_1\log n}$, and this is more than $\frac{2k\log n}{n}$ for $c_1>0$ small enough.

Since we are ready to perform $\frac{n}{2k}$ iterations, with each being successful independently with probability at least $\frac{2k\log n}{n}$, w.h.p. one of them will indeed succeed -- resulting in a set of size $k$ which can be infected by two vertices in it.
\end{proof}

\medskip

The equality $m(G,2)=2$ now follows from Theorem~\ref{thm:Janson}. Namely, in $G_2$ there is an active set $S$ of cardinality $c_1 \log n$ (generated by choosing two ``correct" vertices to start the process in $G_1$). Hence when $p_2=\frac{C_1}{\sqrt{n\log n}}$ for $C_1 > \frac{1}{\sqrt{c_1}}$, we get that with high probability $S$ is a contagious set in $G_2$. 

We now deal with the number of generations. We first consider the upper bound. To this end, we upper bound the number of generations until activation of the contagious set constructed in Lemma~\ref{thm:intermdiate}. We analyze first the number of generations it takes to infect $k$ vertices in the infection process occurring in $G_1$. For this, consider the following random directed graph which we denote by $H_{2,k}$. There are $k$ vertices numbered from~1 to $k$. Each vertex $i \ge 3$ has two outgoing arcs to two random vertices of index less than $i$. It is implicit in the proof of Lemma~\ref{thm:intermdiate}, that the length of the longest directed path of $H_{2,t}$ is an upper bound on the number of generations, which we denote by $l(H_{2,k})$. The parameter $l(H_{2,k})$ was studied in several previous works (e.g.,~\cite{Arya,Depth}) and shown to be of order $\Theta(\log k)$. Here we present a simple self contained proof that $l(H_{2,k})=O(\log k)$ (the leading constant in the $O$-term in our proof is not optimal).

\begin{lemma}\label{lem:diameter}
With high probability, $l(H_{2,k})$ is at most $40 \log k$.
\end{lemma}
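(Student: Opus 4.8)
The plan is to bound the longest directed path in $H_{2,k}$ by a first-moment (union bound) argument over all possible directed paths. A directed path in $H_{2,k}$ with $s+1$ vertices is a decreasing sequence $i_0 > i_1 > \cdots > i_s$ of indices in $[k]$, together with the event that for each consecutive pair $(i_{j-1}, i_j)$ one of the two outgoing arcs of $i_{j-1}$ points to $i_j$. Since vertex $i$ picks each of its two arcs uniformly among the $i-1$ smaller indices, the probability that a fixed smaller index is hit by at least one of the two arcs is at most $\frac{2}{i-1}$, and these events are independent across distinct source vertices $i_{j-1}$ (different vertices have independent arc choices). Hence for a fixed decreasing sequence of length $s+1$ the probability it forms a directed path is at most $\prod_{j=1}^{s} \frac{2}{i_j - 1} \le \prod_{j=1}^{s} \frac{2}{i_j}$ (absorbing the $-1$ with a harmless constant; one may restrict attention to $i_j \ge 2$).

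Next I would sum this over all decreasing sequences. Summing $\prod_{j=1}^s \frac{1}{i_j}$ over all $1 \le i_s < \cdots < i_1 \le k$ is at most $\frac{1}{s!}\left(\sum_{i=1}^k \frac{1}{i}\right)^s \le \frac{(\ln k + 1)^s}{s!}$, since the unordered sum of products over $s$-subsets is bounded by $\frac{1}{s!}$ times the $s$-th power of the single-index sum. Multiplying by the $2^s$ and by $k$ (choices of the top vertex $i_0$, which contributes no factor but must be counted) gives that the expected number of directed paths with $s$ arcs is at most $k \cdot \frac{(2\ln k + 2)^s}{s!}$. Using $s! \ge (s/e)^s$, this is at most $k \left(\frac{2e(\ln k + 1)}{s}\right)^s$. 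For $s = 40\log k$ — note $\log$ is base $2$, so $s \ge (40 \ln 2)(\ln k) > 27 \ln k$ — the base $\frac{2e(\ln k+1)}{s}$ is bounded by a constant strictly less than $1$ for $k$ large, say at most $1/2$, so the whole expression is at most $k \cdot 2^{-40\log k} = k \cdot k^{-40} = k^{-39} = o(1)$. By Markov's inequality, w.h.p. there is no directed path with $40\log k$ arcs, i.e. $l(H_{2,k}) \le 40\log k$.

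The only genuinely delicate point is making sure the independence claim across source vertices is legitimate and that the constant bookkeeping actually closes: each arc choice of vertex $i$ is independent of all arc choices of other vertices, so the event ``$i_{j-1} \to i_j$'' for $j = 1, \dots, s$ involves $s$ distinct and hence independent source vertices $i_0, \dots, i_{s-1}$, justifying the product of probabilities. The constant $40$ is generous precisely to absorb (i) the switch from $\frac{2}{i-1}$ to $\frac{2}{i}$, (ii) the $+1$ in $\ln k + 1$, (iii) the conversion between $\log$ and $\ln$, and (iv) pushing the base of the exponential below $1/2$; none of these requires a careful optimization, which is why I would not grind through exact constants. I expect no real obstacle here — this is a standard branching-process / union-bound estimate, and the main thing to be careful about is simply that the statement is phrased with base-$2$ logarithms while the natural estimate produces $\ln k$, so the slack must be tracked honestly.
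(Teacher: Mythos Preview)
Your proof is correct and takes a genuinely different route from the paper's. One small slip in the write-up: the source of the arc $i_{j-1}\to i_j$ is $i_{j-1}$, so the correct factor is $\frac{2}{i_{j-1}-1}$ rather than $\frac{2}{i_j-1}$. This does not hurt you, because $i_{j-1}-1\ge i_j$ (strict inequality of integers), so in fact $\prod_{j=1}^s \frac{2}{i_{j-1}-1}\le \prod_{j=1}^s \frac{2}{i_j}$ directly, and your stated bound holds without any ``absorbing the $-1$'' fudge. (Correspondingly, the vertex that contributes no factor and yields the extra $k$ is really the sink $i_s$, not the top $i_0$; again purely cosmetic.) The remaining estimates --- $\sum_{i_1>\cdots>i_s}\prod_j\frac{1}{i_j}\le \frac{H_k^s}{s!}$, Stirling, and the base-change from $\log$ to $\ln$ giving $s>27\ln k$ so that $\frac{2e(\ln k+1)}{s}<\frac{1}{2}$ --- are all fine.

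The paper argues differently. It calls an arc $(i,j)$ \emph{good} if $j\le\rho i$ and \emph{bad} otherwise, with $\rho=\frac{19}{20}$. Any path can contain at most $g\approx 20\ln k$ good arcs (since the index drops by a factor $\rho$ at each good step), so a path of length $t=40\ln k$ has at least $t/2$ bad arcs. It then union-bounds over the at most $k\cdot 2^t$ ``candidate paths'' (start vertex, then at each step choose which of the two outgoing arcs to follow) and the at most $2^t$ placements of the good arcs, paying $(1-\rho)^{t/2}$ for the bad arcs, and checks $k\,2^{2t}(1-\rho)^{t/2}=o(1)$. Your direct first-moment computation via the harmonic-sum identity is shorter and avoids the auxiliary good/bad decomposition and the separate enumeration of arc-choices; the paper's argument, on the other hand, never needs to sum over target sequences and would adapt more readily to models where the outgoing arcs are not exactly uniform on $\{1,\ldots,i-1\}$ but merely land below $\rho i$ with probability bounded away from $0$.
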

\begin{proof}
Let $0 < \rho < 1$ be a constant to be optimized later. Call an arc $(i,j)$ in $H_{2,k}$ {\em good} if $j \le \rho i$ and {\em bad} if $j > \rho i$ (note that necessarily $j < i$). A path can have at most $g$ good arcs, where $g$ is largest number satisfying $k\rho^g \ge 1$. Given a vertex $i$, the probability that a random outgoing arc is bad is at most $(1 - \rho)$. For arbitrary $t \ge 2g$, let us upper bound the probability that there is a path of length $t$. There are (less than) $k$ possible starting points. From each vertex there are two outgoing arcs to choose. So the number of candidate paths is at most $k2^t$. For each candidate path, there are $\sum_{i=1}^g {t \choose i} \le 2^t$ possible locations for the good arcs. For the rest of the arcs to be bad, the probability is at most $(1 - \rho)^{t-g} \le (1 - \rho)^{t/2}$. Hence the probability that some candidate path actually reaches length $t$ is at most $k2^{2t}(1 - \rho)^{t/2}$.

Choose $\rho = \frac{19}{20}$. Then $g \simeq 20\ln k$, and we can choose $t = 40\ln k$. For these parameters,  $k2^{2t}(1 - \rho)^{t/2} = k \frac{2^{40 \ln k}}{20^{10\ln k}} = k (\frac{4}{5})^{10\ln k} = o(1)$. Hence w.h.p. $l(H_{2,k})$ does not exceed $40\ln k$.
\end{proof}

Lemma~\ref{lem:diameter} implies that with high probability the number of generations until $B_0$ infects a set of size $c_1 \log n$ in $G_1$ is at most $O(\log k)=O(\log \log n)$.
Thereafter, Theorem~\ref{thm:Janson} implies that with high probability all the vertices in $G_2$ are infected within $O(\log \log n)$ generations.

Now we establish a lower bound on the number of generations.
We first claim that with high probability no set of size $k \geq \log n$ can infect too many vertices in a single round.

\begin{lemma}\label{lemma:growth}
Suppose that $p \leq \frac{1}{\sqrt{2 e n}}$. Then, with high probability every set of size $k \geq \log n$ in $G(n,p)$ infects (in one round) a set of size smaller than $k^2$.
\end{lemma}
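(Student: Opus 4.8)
The plan is to bound, by a union bound over sets $S$ of size $k$ and over candidate "newly infected" sets $W$ of size $k^2$, the probability that all vertices of $W$ get at least $2$ neighbors in $S$. First I would observe that if a set $S$ with $|S|=k$ infects at least $k^2$ vertices in a single round, then there is a set $W$ with $|W| = k^2$ such that every $w \in W$ has at least two neighbors in $S$; the edges counted here lie between $S$ and $W$, which are disjoint, so the relevant indicator events (one per vertex of $W$, asking for $\ge 2$ edges into $S$) are mutually independent. For a fixed $w$, the probability of having at least two neighbors in $S$ is at most $\binom{k}{2}p^2 \le \frac{k^2p^2}{2}$.

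Next I would assemble the union bound. The number of choices of $S$ is $\binom{n}{k} \le \left(\frac{en}{k}\right)^k$, and the number of choices of $W$ is $\binom{n}{k^2} \le \left(\frac{en}{k^2}\right)^{k^2}$, and the probability that a fixed $W$ is entirely dominated (each vertex with $\ge 2$ neighbors in a fixed $S$) is at most $\left(\frac{k^2 p^2}{2}\right)^{k^2}$. Multiplying, the failure probability is at most
$$
\left(\frac{en}{k}\right)^k \left(\frac{en}{k^2}\right)^{k^2}\left(\frac{k^2p^2}{2}\right)^{k^2}
= \left(\frac{en}{k}\right)^k \left(\frac{enp^2}{2}\right)^{k^2}.
$$
Now using the hypothesis $p \le \frac{1}{\sqrt{2en}}$ we get $\frac{enp^2}{2} \le \frac{1}{4}$, so the second factor is at most $4^{-k^2}$, which for $k \ge \log n$ easily dominates the first factor $\left(\frac{en}{k}\right)^k \le n^{k} = 2^{k\log n} \le 2^{k^2}$ (since $k \ge \log n$). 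Hence the whole expression is at most $(2^{k^2})(4^{-k^2}) = 2^{-k^2} \le 2^{-\log^2 n} = o(1)$, and in fact the bound is summable over all $k \ge \log n$, so w.h.p. the conclusion holds simultaneously for all such $k$.

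The only mild subtlety — and the step I would be most careful about — is the independence claim: it is crucial that the events "$w$ has $\ge 2$ neighbors in $S$", for distinct $w \in V \setminus S$, depend on disjoint sets of potential edges (namely the edges between $w$ and $S$), which is exactly why they are independent once $S$ is fixed; this is what lets us raise the single-vertex probability to the power $k^2$ rather than paying a correlation penalty. Everything else is routine: choosing to phrase the count in terms of the $k^2$ vertices that do get infected (rather than, say, counting edges) keeps the arithmetic clean, and the slack between $4^{-k^2}$ and $2^{k^2}$ is comfortable enough that no optimization of constants is needed.
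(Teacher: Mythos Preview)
Your proof is correct and follows essentially the same approach as the paper's: a union bound over choices of $S$ (size $k$) and $W$ (size $k^2$), together with the single-vertex bound $\Pr(\text{vertex has }\ge 2\text{ neighbors in }S)\le (kp)^2$ (you use the slightly sharper $\binom{k}{2}p^2$), independence across $w\in W$, and then summing over $k\ge\log n$. The only cosmetic difference is that your extra factor of $2$ in the single-vertex bound yields the cleaner estimate $2^{-k^2}$, whereas the paper arrives at $\left(\frac{en}{k}\right)^k(1/2)^{k^2}$ and asserts it is $o(1/n)$; both are fine.
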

\begin{proof}
Given that a set $S$ of size $k$ is active, the probability that a vertex outside $S$ is infected by $S$ in one round is $Pr(\Bin(k,p)\geq 2) \le (pk)^2$.
Therefore, the probability for a fixed $k$ there is a set of size $k$ infecting $k^2$ additional vertices in one round is at most
$${n \choose k} \cdot {n \choose k^2}(p^2 k^2)^{k^2},$$
Which can be upper bounded (when $k \ge \log n$) by
$$\left(\frac{en}{k}\right)^k\left(\frac{1}{2}\right)^{k^2}=o(1/n).$$
Taking a union bound over all $k \geq \log n$ concludes the proof.
\end{proof}

Lemma~\ref{lemma:growth} implies that with high probability, for every contagious set of size $\log n$, the number of generations required to infect $G(n,p)$ for $p \leq \frac{1}{\sqrt{2 e n}}$ is at least $\log \log n -\log \log \log n$. The same must hold for contagious sets of size~2 (because every contagious set of size~2 is contained in a contagious set of size $\log n$). This concludes the analysis of the number of generations and the proof of Theorem~\ref{thm:threshold} (for the case $r = 2$).

\section{Generalizing the results for $r>2$}\label{sec:generalization}

In this section we study the case where the threshold of every vertex is $r$, where $r>2$ is a fixed constant. As the proofs are similar to the $r=2$ case, we sketch the main ideas without going into every detail.

\subsection{The asymptotic value of $m(G,r)$ in $G(n,p)$}

Here we explain that a similar reasoning to the case $r=2$ implies that in $G(n,p)$, w.h.p. $m(G,r)\sim \frac{n}{d^{\frac{r}{r-1}}\log d}$. We begin by discussing the upper bound.
First we have the following lemma.

\begin{lemma}\label{lemma:partial_infecr}
Let $r>2$ be a fixed integer (independent of $n$).
For $d\ge 100 r$, a random graph $G\sim G(n,p)$ is w.h.p. such that activating any set of size at least $n/2$ infects all but at most $n/d^3$ vertices.
\end{lemma}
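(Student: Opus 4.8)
The plan is to mimic the proof of Lemma~\ref{lemma:partial_infec} almost verbatim, tracking how the single relevant inequality changes when the threshold is $r$ rather than $2$. Let $A_0$ be any activated set of size at least $n/2$, and let $U=[n]\setminus\langle A_0\rangle$ be the set of vertices that remain inactive. Since no vertex of $U$ ever accumulates $r$ active neighbours, every vertex of $U$ has at most $r-1$ neighbours outside $U$; consequently the number of edges in the cut $(U,[n]\setminus U)$ is at most $(r-1)|U|$. So it suffices to show that w.h.p.\ $G(n,p)$ contains no vertex set $U$ with $n/d^3\le |U|\le n/2$ such that $e(U,[n]\setminus U)\le (r-1)|U|$.

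The key step is the union bound. For a fixed $k$ with $n/d^3\le k\le n/2$, the number of candidate sets is $\binom nk\le (en/k)^k$, and for a fixed such set $U$ of size $k$, the cut $e(U,[n]\setminus U)$ is distributed as $\Bin(k(n-k),p)$ with mean $k(n-k)p\ge knp/2$. Since $r-1$ is a constant while $np=d\ge 100r$, we have $(r-1)k\le knp/4\le \tfrac12\cdot\tfrac{knp}{2}$, so Lemma~\ref{lemma:Chernoff} (with $\eta=1/2$) gives $\Pr[\Bin(k(n-k),p)\le (r-1)k]\le e^{-knp/16}$. Hence the probability that a bad $U$ exists is at most
$$\sum_{k=\max\{1,n/d^3\}}^{n/2}\left(\frac{en}{k}\right)^k e^{-knp/16}=\sum_{k}\left(\frac{en}{k}\,e^{-d/16}\right)^{k}.$$
As in Lemma~\ref{lemma:partial_infec}, split into two regimes. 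If $d\le n^{1/3}$, then for $k\ge n/d^3$ we have $n/k\le d^3$, so each summand is at most $(e d^3 e^{-d/16})^k$, and since $d\ge 100r$ is large the base is below $1$ (indeed $o(1)$), giving a geometric sum that is $o(1)$. If $d>n^{1/3}$, then $e^{-d/16}\le e^{-n^{1/3}/16}$ swamps the factor $en/k\le en$ for every $k\ge 1$, and again the sum is $o(1)$.

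I do not expect a genuine obstacle here: the only place where $r$ enters is the replacement of the bound ``$\le k$'' on the cut size by ``$\le (r-1)k$'', and since $r$ is a fixed constant and $d\ge 100r$ this is absorbed with room to spare in the Chernoff exponent. The one thing to be a little careful about is the threshold $d\ge 100r$ in the statement: it is exactly what guarantees $(r-1)k\le knp/4$ and keeps the base of the geometric series below $1$ after the exponential factor $e^{-d/16}$ (or $e^{-n^{1/3}/16}$) is taken into account, so the constant $100r$ (or any sufficiently large multiple of $r$) should be stated and used explicitly rather than hidden in an $O(\cdot)$.
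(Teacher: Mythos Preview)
Your proposal is correct and follows essentially the same approach as the paper: the paper's proof is a one-line sketch pointing back to Lemma~\ref{lemma:partial_infec}, observing that $U=[n]\setminus\langle A_0\rangle$ sends at most $r|U|$ edges into $\langle A_0\rangle$ and that the rest is identical once $d\ge 100r$. You supply exactly those omitted details (with the slightly sharper cut bound $(r-1)|U|$), including the Chernoff estimate and the case split at $d=n^{1/3}$, so there is nothing to add.
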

\begin{proof}
This follows from the fact that if a set $U$ is disjoint from a set $\langle A_0 \rangle$, then $U$ can have at most $r|U|$ neighbors in $\langle A_0 \rangle$. Using the assumption that $\frac{n}{d^3} \leq |U| \leq n/2$ and choosing $d_0 > 100r$, implies the lemma along similar lines to Lemma~\ref{lemma:partial_infec} -- details omitted.
\end{proof}

\medskip

Our goal is to infect a set of size $\frac{C_1n}{d^{\frac{r}{r-1}}}$ where $C_1$ is large enough.  Then by applying Theorem~\ref{thm:Janson}, we conclude that with high probability at least $n/2$ vertices are activated. Finally, using Lemma~\ref{lemma:partial_infecr} we will be able to deduce that with high probability all of $G$ infected. To achieve this goal, it suffices to make some modest changes to the algorithm presented in Theorem~\ref{thm:main}. The main difference is that now we look in the $i$th iteration for large connected components in the set of all vertices having $r-1$ neighbors in $C_{i-1}$. As in the proof of Theorem~\ref{thm:main} we initially set $\ell=\log \log d$ and $B_0=C_0=D_0$ to be a fixed subset of $[n]$ of size $\frac{C_1n}{d^{\frac{r}{r-1}}\log d}$ and run for $\ell$ iterations, where in every iteration $\ell$ drops down by $1$ -- terminating once $\ell=0$. Specifically, \newline
{\bf Iterating:} We aim to get a set $C_i$ of size
$$
|C_i|=\frac{C_1n}{d^{\frac{r}{r-1}}2^{\ell-i}}\,.
$$
Given a set $C_{i-1}$, we find a subset $B_i$ of vertices in $V\setminus\bigcup_{j=0}^{i-1}B_j$, all having at least $r-1$ neighbors in $C_{i-1}$, and
$$
b_i:=|B_i|=\frac{C_1^{r-1}}{2^{(\ell-i+1)(r-1)}\cdot d\cdot(r-1)^{r-1}}\, \frac{n}{2} =: n_0\,.
$$
Since the probability a vertex (disjoint from $C_{i-1}$) has at least $r-1$ neighbors in $C_{i-1}$ is asymptotically equal to  $\binom{|C_{i-1}|}{r-1}p^{r-1}\ge\left(\frac{|C_{i-1}|}{r-1}p\right)^{r-1}$, we get using the Chernoff bound that the size of $B_i$ is indeed lower bounded by $\left(\frac{|C_{i-1}|}{r-1}p\right)^{r-1}2(n-o(1))/3 > b_i$ with probability at least $1-\exp\left(-O(\frac{n}{d (\log d)^{r-1}})\right)$, where terms depending only on $r$ are treated as constants (as we assume $r$ is a constant not depending on $n$). It is therefore straightforward to verify that with high probability for all $i \in [\ell]$ it holds that $|B_i| \geq b_i$.

Analogously to the $r=2$ case, if a set $S$ is connected, and every vertex in $S$ has at least $r-1$ active neighbors, it suffices to activate a single vertex in $S$ in order
to infect the whole of $S$. Hence we estimate the number of ``large" connected components in $B_i$.

Set
$$
s_i =\frac{\log d}{(\ell-i+1)r^2}\,
$$

Apply Lemma~\ref{lemma:subcritical} to $G[B_i]$ with parameters
$$
n_0,\quad q=\frac{d}{n}=\frac{C_1^{r-1}}{2^{(\ell-i+1)(r-1)+1}(r-1)^{r-1}n_0},\quad k=s_i\,.
$$
We need to verify:
$$
\left(\frac{C_1^{r-1}}{3\cdot 2^{(\ell-i+1)(r-1)+1}(r-1)^{r-1}}\right)^{s_i}\frac{n_0}{4}\ge \frac{C_1n}{d^{\frac{r}{r-1}}2^{\ell-i}}\,,
$$
which amounts to
$$
\left(\frac{3\cdot 2^{(\ell-i+1)(r-1)+1}(r-1)^{r-1}}{C_1^{r-1}}\right)^{s_i}\le \frac{n_0d^{\frac{r}{r-1}}2^{\ell-i}}{4C_1n}
= \frac{C_1^{r-2}d^{\frac{1}{r-1}}2^{\ell-i-2}}{2^{(\ell-i+1)(r-1)+1}(r-1)^{r-1}}\ .
$$
This follows from $\left(\frac{6}{C_1^{r-1}}\right)^{s_i}\le \frac{C_1^{r-2}}{16(r-1)^{r-1}}$ and $2^{(\ell-i+1)(r-1)s_i}\le
\frac{d^{\frac{1}{r-1}}}{2^{(\ell-i+1)(r-2)+4}}$. The former inequality is valid when $C_1>6 \cdot 17(r-1)^{r-1}$ and large enough $d$ (e.g., $d$ such that $s_i>1$). For the latter inequality, we need to satisfy
$$
s_i\le \frac{\frac{1}{r-1}\log d -(\ell-i+1)(r-2)-4}{(\ell-i+1)(r-1)}$$

$$= \frac{\log d}{(\ell-i+1)(r-1)^2}-\frac{r-2}{r-1}-\frac{4}{(\ell-i+1)(r-1)}
$$
-- which is indeed valid for our choice of $s_i$.

From the calculations outlined in the paragraph above and Lemma~\ref{lemma:subcritical}, the probability that $|C_i|< \frac{C_1n}{d^{\frac{r}{r-1}}2^{\ell-i}}$ is upper bounded by
$$e_i:=\exp \left(- \frac{C_1n}{s_id^{\frac{r}{r-1}}2^{\ell-i}}\right)=\exp \left(- \frac{C_1n(\ell-i+1)r^2\log d}{d^{\frac{r}{r-1}}2^{\ell-i}}\right).$$ Hence the probability that a failure will occur in one of the rounds is upper bounded by $\sum_{i=1}^{\ell} e_i$. Analogous reasoning to the $r=2$ case implies that $$\sum_{i=1}^{\ell} e_i=\exp\left(-\Theta\left(\frac{n \log \log d}{d^{\frac{r-1}{r}}\log^2 d}\right)\right).$$  Substituting $d = o\left(\left(\frac{n \log \log n}{\log^2 n}\right)^{\frac{r-1}{r}}\right),$ we have that the probability there exists $i \in [\ell]$ such that $|C_i|< \frac{C_1n}{d^{\frac{r}{r-1}}2^{\ell-i}}$ is $o(1)$.

The total size of the seed of Stage I is then
\begin{eqnarray*}
&&\frac{C_1n}{d^{\frac{r}{r-1}}\log d}+\sum_{i=1}^{\ell}\frac{C_1n}{d^{\frac{r}{r-1}}2^{\ell-i}s_i}
=\frac{C_1n}{d^{\frac{r}{r-1}}\log d}+\frac{C_1n}{d^{\frac{r}{r-1}}\log d}\sum_{i=1}^{\ell}\frac{\ell-i+1}{2^{\ell-i}}\\
&=& O\left(\frac{n}{d^{\frac{r}{r-1}}\log d}\right)\,.
\end{eqnarray*}

This concludes the proof that for $d$ satisfying the condition in Theorem~\ref{thm:random}, with high probability $m(G,r) \leq O\left(\frac{n}{d^{\frac{r-1}{r}}\log d}\right)$.

Now we turn to the lower bound. We shall use the following auxiliary Lemma:
\begin{lemma}
Let $\alpha=3$ and $\beta = r - \frac{r-1}{\log d}$. Set $t = \frac{n}{\alpha d^{\frac{r}{r-1}}}$ and assume $d=np=o(n)$ is larger than an appropriate constant $d_0$ (that may depend on $r$). Then with high probability no set of vertices of size $t$ spans $\beta t$ edges.
\end{lemma}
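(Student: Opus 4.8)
The plan is to mimic the first-moment argument of Lemma~\ref{lem:density}, with the exponents adjusted to reflect the $r$-neighbor threshold. The underlying combinatorial fact that motivates the choice of $t$ and $\beta$ is that if $B_0$ is a contagious set and we track the infection process, then adding the first $t-|B_0|$ infected vertices to $B_0$ yields a set of $t$ vertices that spans at least $r(t-|B_0|)$ edges, since each infected vertex contributes $r$ edges to vertices activated no later than itself. So I fix $\alpha=3$, $\beta=r-\frac{r-1}{\log d}$, and $t=\frac{n}{\alpha d^{r/(r-1)}}$, and bound the probability that $G(n,p)$ contains \emph{some} set of $t$ vertices spanning $\beta t$ edges.

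The calculation proceeds exactly as before. There are $\binom{n}{t}\le\left(\frac{en}{t}\right)^t=(e\alpha d^{r/(r-1)})^t$ choices of vertex set $T$; there are $\binom{\binom{t}{2}}{\beta t}\le\left(\frac{et}{2\beta}\right)^{\beta t}$ choices of edge slots; and each slot is present with probability $p^{\beta t}=\left(\frac{d}{n}\right)^{\beta t}=\left(\frac{1}{\alpha d^{1/(r-1)}t}\right)^{\beta t}$, using $n=\alpha d^{r/(r-1)}t$, i.e.\ $\frac{d}{n}=\frac{1}{\alpha d^{1/(r-1)}t}$. Multiplying, the $t^{\beta t}$ and $t^{-\beta t}$ cancel and the bound becomes
\[
\left(e\alpha d^{\frac{r}{r-1}}\right)^t\left(\frac{e}{2\beta}\right)^{\beta t}\left(\frac{1}{\alpha d^{\frac{1}{r-1}}}\right)^{\beta t}
=\left(\frac{e^{\beta+1}\,\alpha\,d^{\frac{r}{r-1}-\frac{\beta}{r-1}}}{\alpha^{\beta}(2\beta)^{\beta}}\right)^t .
\]
Now the point of the choice $\beta=r-\frac{r-1}{\log d}$ is that $\frac{r}{r-1}-\frac{\beta}{r-1}=\frac{r-\beta}{r-1}=\frac{1}{\log d}$, so $d^{(r-\beta)/(r-1)}=d^{1/\log d}=2$. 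Hence the base of the exponential is $\frac{2e^{\beta+1}\alpha}{\alpha^{\beta}(2\beta)^{\beta}}$, and since $\beta\to r$ as $d\to\infty$ this is, up to a $1+o(1)$ factor, $\frac{2e^{r+1}}{\alpha^{r-1}(2r)^r}$. For $\alpha=3$ and any fixed $r\ge 2$ this quantity is strictly less than $1$ once $d$ exceeds a suitable constant $d_0=d_0(r)$ (the worst case is $r=2$, where it is $\approx\frac{2e^3}{16}\cdot\frac{1}{\alpha}=\frac{e^3}{8\alpha}<1$ for $\alpha=3$; larger $r$ only helps because of the $(2r)^{-r}$ factor). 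Raising to the power $t$, and noting $t=\frac{n}{\alpha d^{r/(r-1)}}\to\infty$ under the hypothesis $d=o(n)$ — indeed $d\ll(n\log\log n/\log^2 n)^{(r-1)/r}$ forces $d^{r/(r-1)}\ll n$ — the probability tends to $0$.

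There is no serious obstacle here; the only point requiring a little care is handling the slack between $\beta$ and $r$ in the constant factors $\alpha^{\beta}$ and $(2\beta)^{\beta}$ (as opposed to the exponent of $d$, where the choice of $\beta$ is exact). As in Lemma~\ref{lem:density}, one absorbs this by noting that replacing $\beta$ by $r$ in those factors changes the base of the exponential by a factor $1+o(1)$ as $d\to\infty$, and the chosen $\alpha=3$ is comfortably larger than the threshold value needed, so the strict inequality ``base $<1$'' survives for all large enough $d$. One should also remark that $t$ is an integer only up to rounding, which is immaterial. Finally, I would close by recording the corollary analogous to the $r=2$ case: if $m(G,r)\le\frac{n}{6d^{r/(r-1)}\log d}$ held, then taking $t=\frac{n}{3d^{r/(r-1)}}$, the union of the (at most $\frac{n}{6d^{r/(r-1)}\log d}$) seeds with the first $t-|B_0|$ infected vertices would be a set of $t$ vertices spanning at least $r(t-|B_0|)=\left(r-\frac{r-1}{\log d}\right)t=\beta t$ edges, contradicting the lemma; hence w.h.p.\ $m(G,r)=\Omega\!\left(\frac{n}{d^{r/(r-1)}\log d}\right)$, which together with the upper bound completes Theorem~\ref{thm:random}.
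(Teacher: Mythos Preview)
Your argument is correct and essentially identical to the paper's: the same first-moment bound $\binom{n}{t}\binom{\binom{t}{2}}{\beta t}p^{\beta t}$ with the same estimates, leading to the base $\frac{e^{\beta+1}d^{1/\log d}}{\alpha^{\beta-1}(2\beta)^{\beta}}$; you simply spell out the verification that this base is strictly less than~1 (which the paper leaves as ``it can be verified''). One small slip in your appended corollary: with $t_0=\frac{n}{6d^{r/(r-1)}\log d}$ one gets $r(t-t_0)=\bigl(r-\tfrac{r}{2\log d}\bigr)t\ge\beta t$ rather than equality --- the paper takes $t_0=\frac{(r-1)n}{3r\,d^{r/(r-1)}\log d}$ precisely to make $r(t-t_0)=\beta t$ exact --- but since $r\ge 2$ your inequality goes the right way and the conclusion stands.
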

\begin{proof}
Using the equality $p=\frac{1}{\alpha d^{\frac{1}{r-1}}t}$ we conclude that the probability that $G(n,p)$ contains a set of size $t$ that spans at least $\beta t$ edges is upper bounded by
$$
\binom{n}{t}\binom{\binom{t}{2}}{\beta t}p^{\beta t}\le
(e\alpha d^{\frac{r}{r-1}})^t \left(\frac{et}{2\beta}\right)^{\beta t} \left(\frac{1}{\alpha d^{\frac{1}{r-1}}t}\right)^{\beta t} = \left(\frac{e^{\beta + 1}d^{1/\log d}}{\alpha^{\beta - 1}2^{\beta}\beta^{\beta}}\right)^t.$$ It can be verified that the latter expression is $o(1)$.
\end{proof}
\begin{corollary}
Let $G$ be distributed as $G(n,p)$, where $d_0<d=np \ll n^{-1/r}$ and $d_0$ is a large enough constant that may depend on $r$ but not on $n$. Then with high probability
$m(G,r) > \frac{(r-1)n}{3r d^{\frac{r}{r-1}} \log d}$.
\end{corollary}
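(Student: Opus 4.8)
The plan is to mirror, essentially verbatim, the lower-bound argument already carried out for $r=2$ (the Corollary following Lemma~\ref{lem:density}), now invoking the preceding auxiliary lemma with its first-moment sparsity bound for general $r$. I would argue by contradiction: suppose $m(G,r) \le t_0 := \frac{(r-1)n}{3rd^{r/(r-1)}\log d}$ and fix a contagious set $A_0$ with $|A_0| = t_0$. Set $t := \frac{n}{3d^{r/(r-1)}} = \frac{n}{\alpha d^{r/(r-1)}}$ with $\alpha = 3$, matching the parameters of the auxiliary lemma. Note $t_0 < t$, since $t_0/t = \frac{r-1}{r\log d} < 1$ for $d \ge d_0$; and since $A_0$ is contagious the activation process reaches all $n$ vertices, so there are at least $n - t_0 \ge t - t_0$ infected (non-seed) vertices.

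Next I would order the infected vertices $v_1, v_2, \ldots$ by the generation in which they become active, breaking ties within a generation arbitrarily, and put $T := A_0 \cup \{v_1,\ldots,v_{t-t_0}\}$, a vertex set of size exactly $t$. The key observation is an edge-counting lower bound: when $v_j$ (with $j \le t-t_0$) becomes active it has at least $r$ neighbors that are active strictly earlier, so each such neighbor is either a seed of $A_0$ or one of $v_1,\ldots,v_{j-1}$, and in either case lies in $T$. Charging each such edge to its later endpoint $v_j$, these edge sets are pairwise disjoint over $j$, whence $e(G[T]) \ge r(t-t_0)$.

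Finally I would substitute the numbers: $r(t-t_0) = rt - rt_0 = rt - \frac{(r-1)n}{3d^{r/(r-1)}\log d} = rt - \frac{(r-1)t}{\log d} = \bigl(r - \tfrac{r-1}{\log d}\bigr)t = \beta t$, with $\beta = r - \frac{r-1}{\log d}$ exactly as in the auxiliary lemma. Thus $G[T]$ is a set of $t$ vertices spanning at least $\beta t$ edges, contradicting that lemma, which applies since $d_0 < d = o(n)$. Hence w.h.p. no contagious set of size at most $t_0$ exists, i.e. $m(G,r) > \frac{(r-1)n}{3rd^{r/(r-1)}\log d}$.

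I do not expect a real obstacle here: the argument is a routine adaptation of the $r=2$ case, and the only points needing a moment's care are verifying $t_0 < t$ (so that ``the first $t-t_0$ infected vertices'' is well-defined) and being precise that all $r$ witnessing active neighbors of each $v_j$ genuinely land inside $T$ (they do, being activated before $v_j$). The substantive probabilistic content — the union-bound estimate ruling out a dense $t$-vertex subgraph — has already been isolated in the auxiliary lemma, whose proof is the true analogue of Lemma~\ref{lem:density}.
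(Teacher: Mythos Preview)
Your proposal is correct and follows exactly the paper's own argument: take a hypothetical contagious set of size $t_0=\frac{(r-1)n}{3rd^{r/(r-1)}\log d}$, adjoin the first $t-t_0$ infected vertices to obtain a set of size $t=\frac{n}{3d^{r/(r-1)}}$ spanning at least $r(t-t_0)=\bigl(r-\tfrac{r-1}{\log d}\bigr)t=\beta t$ edges, and contradict the auxiliary lemma. You have simply made explicit a few details (that $t_0<t$, and why each $v_j$'s $r$ witnessing neighbors lie in $T$) that the paper leaves implicit.
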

\begin{proof}
Suppose there exists a contagious set $A_0$ of size $t_0=\frac{(r-1)n}{3r d^{\frac{r}{r-1}} \log d}$. Setting $t = \frac{n}{3 d^{\frac{r}{r-1}}}$, we get that $A_0$ together with the first $t-t_0$ infected vertices, would produce a set $A$ of size $\frac{n}{3d^{\frac{r}{r-1}}}$ spanning at least $r(t-t_0)=(r-\frac{r-1}{\log d})t$ edges. As we have just shown, w.h.p. such a set $A$ does not exist. This concludes the proof.
\end{proof}

\subsection{The threshold for $m(G,r)=r$}

It turns out that the threshold for the emergence of a contagious set of size $r$ in $G(n,p)$ is $p\sim (n\log^{r-1} n)^{-1/r}$. We begin by proving that contagious sets of size $r$ are unlikely to exist when $p=c (n\log^{r-1} n)^{-1/r}$, for some appropriate constant $c>0$.

\begin{lemma}
Suppose that $p\leq c (n\log^{r-1} n)^{-1/r}$ for some $c>0$ that is sufficiently small. Then with high probability $m(G,r)>r$, when $G$ is distributed as $G(n,p)$.
\end{lemma}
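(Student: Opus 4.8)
The plan is to mimic the lower-bound argument for $r=2$ (Lemma~\ref{lemma:threshold} and, more directly, the $r>2$ density argument already established): if a contagious set of size $r$ exists, then adding to it the first few infected vertices produces a small dense subgraph, and a first-moment union bound rules out such subgraphs with high probability when $p$ is small enough. Specifically, suppose for contradiction that $m(G,r)=r$, i.e. there is a seed set $A_0$ with $|A_0|=r$ and $\langle A_0\rangle=[n]$. Take $t=\Theta(\log n)$ (the right choice will turn out to be $t$ of order $\log n$, say $t=c_1\log n$ for a suitable small constant) and let $A$ consist of $A_0$ together with the first $t-r$ vertices that get infected. Each of the $t-r$ infected vertices has, at the moment it is infected, at least $r$ neighbors already inside the activated set, hence at least $r$ neighbors inside $A$. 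Therefore $G[A]$ has at least $r(t-r)$ edges on $t$ vertices, which for $t$ large compared to $r$ is at least $(r-o(1))t$ edges.

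The main step is then the counting bound. The probability that $G(n,p)$ contains \emph{some} set of $t$ vertices spanning at least $r(t-r)$ edges is at most
\begin{equation*}
\binom{n}{t}\binom{\binom{t}{2}}{r(t-r)}p^{r(t-r)}
\le \left(\frac{en}{t}\right)^{t}\left(\frac{et^{2}}{2r(t-r)}\right)^{r(t-r)}p^{r(t-r)}.
\end{equation*}
Writing $p = c\,(n\log^{r-1}n)^{-1/r}$, so $p^{r}= c^{r}\,n^{-1}\log^{-(r-1)}n$, and choosing $t=c_1\log n$, the factor $n^{t}$ is cancelled by $p^{rt}\cdot$(polylog), because $n^{t}\cdot n^{-(t-r)} = n^{r}$ is only polynomial in $n$ while the remaining $\log$ powers and the $(en/t)^{t}$, $(et/2r)^{rt}$ terms contribute $\exp(\Theta(t\log\log n))$ against a gain of $(\log^{r-1}n)^{t-r}=\exp(\Theta(t\log\log n))$ from the denominator of $p^{r(t-r)}$; the constant $c$ enters as $c^{r(t-r)}$, and taking $c$ sufficiently small makes the whole expression decay like $(c')^{t}=n^{-\Omega(1)}=o(1)$. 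This is exactly the bookkeeping that appears in Lemma~\ref{lemma:threshold} and in the $r>2$ density lemma above, just with $t=\Theta(\log n)$ instead of $t=\Theta(n/d^{r/(r-1)})$.

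I would organize the write-up as: (i) assume $m(G,r)=r$ and extract the set $A$ of size $t=c_1\log n$ with $\ge r(t-r)$ internal edges; (ii) state the union bound displayed above; (iii) substitute $p=c(n\log^{r-1}n)^{-1/r}$ and $t=c_1\log n$, track the powers of $n$ (which collapse to $n^{r}$, harmless) and the powers of $\log n$ (which cancel between $p^{-r(t-r)}$ and the binomial/Stirling factors up to the constant $c$), and conclude the bound is $o(1)$ for $c$ small enough; (iv) invoke the union bound over the single relevant value of $t$ (or over all $t\le c_1\log n$, a negligible sum) to finish. The only mildly delicate point — the "hard part" — is verifying that the $\log\log n$-type terms from $(en/t)^{t}$ and from $\binom{\binom{t}{2}}{r(t-r)}$ are genuinely absorbed by the $(\log^{r-1}n)^{t-r}$ gain with room to spare, so that the free constant $c$ can be used to push the base below $1$; this is a routine but careful comparison of exponents of $\log n$, entirely analogous to the $r=2$ computation, and I do not expect any real obstruction beyond keeping the constants straight.
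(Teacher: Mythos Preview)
Your proposal is correct and essentially identical to the paper's proof: both extract from a putative size-$r$ contagious set a subset of $\Theta(\log n)$ vertices (the seeds plus the first infected vertices) spanning about $r$ times as many edges, and then rule this out by a first-moment union bound after substituting $p=c(n\log^{r-1}n)^{-1/r}$ and taking $c$ small. The paper's only cosmetic differences are that it parametrizes the dense set as $t+r$ vertices with $rt$ edges (separating the seeds via $\binom{n}{r}\binom{n}{t}$) rather than your $t$ vertices with $r(t-r)$ edges, and it sets $t=\log n$ directly rather than $t=c_1\log n$ with small $c_1$ (the smallness of $c_1$ is not needed here---only the smallness of $c$ matters).
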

\begin{proof}
By a similar reasoning to the case  $r=2$, if $G(n,p)$ has a contagious set of size $r$ then for every $1<t\leq n-r$ it has a set of $t+r$ vertices spanning at least $rt$ edges. The probability that such a set exists is upper bounded by
$${n \choose r}{n \choose t}{(t+r)^2/2 \choose rt}p^{rt}.$$
For large enough $t$, the expression above can be upper bounded by
$$n^r\left(\frac{e n}{t}\right)^t\left(\frac{e(t+r)^2p}{2rt}\right)^{rt}<n^r\left(n\frac{ (etp/r)^r}{t}\right)^{t},$$
where we used the fact that $(t+r)^2<2t^2$ for large enough $t$.
Setting $t=\log n$ and substituting the value of $p$, we can upper bound the expression above by
$n^r (c')^t$ for some $c'>0$ that tends to $0$ as $c\rightarrow 0$. Hence, taking $c$ to be sufficiently small we can ensure that $(c')^{\log n}<n^{-{r+1}}$, implying that the probability there exists a contagious set of size $r$ is at most $1/n$. This concludes the proof of the Lemma.
\end{proof}

\medskip

We now proceed and prove that if $p>C (n\log^{r-1} n)^{-1/r}$ for a suitable constant $C$, then with high probability there is a contagious set of size $r$ in $G(n,p)$.

\begin{theorem}
Suppose that $p>C (n\log^{r-1} n)^{-1/r}$, where $C$ is a sufficiently large constant that may depend on $r$. Let $G$ be distributed as $G(n,p)$. Then with high probability $m(G,r)=r$.
\end{theorem}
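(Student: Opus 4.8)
The plan is to mirror the proof of the case $r=2$ (Lemma~\ref{thm:intermdiate} together with the paragraph following it). Expose $G$ in two independent rounds, $G=G_1\cup G_2$ with $G_i\sim G(n,p_i)$, where $p_1=(n\log^{r-1}n)^{-1/r}$ and $p_2=C_1(n\log^{r-1}n)^{-1/r}$ for a large constant $C_1=C_1(r)$; since the edge density of $G_1\cup G_2$ is at most $p_1+p_2=(1+C_1)(n\log^{r-1}n)^{-1/r}<p$ as soon as $C\ge 1+C_1$, and since ``has a contagious set of size $r$'' is a monotone property, it suffices to prove the claim for $G_1\cup G_2$. In the first round I would show that w.h.p.\ $G_1$ contains $r$ vertices whose activation infects a set $U$ with $|U|=k:=c_1\log n$, for a small constant $c_1=c_1(r)>0$; in the second round I would use Theorem~\ref{thm:Janson} to show that, w.h.p.\ over $G_2$, the set $U$ viewed as a seed set in $G_2$ infects all of $V$, so that these $r$ vertices form a contagious set of $G\supseteq G_2$ and hence $m(G,r)=r$.

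For the first round I would generalize the iterative construction in the proof of Lemma~\ref{thm:intermdiate}. Maintain a shrinking vertex set $V_0$ (initially $[n]$) and run up to $n/(2k)$ iterations. In an iteration, pick $r$ arbitrary vertices $u_1,\dots,u_r\in V_0$, split the remaining vertices of $V_0$ into $k-r$ blocks each of size $\ge \lfloor(|V_0|-r)/(k-r)\rfloor\ge n/(2k)$, and for $j=r+1,\dots,k$ try to pick $u_j$ from the $(j-r)$-th block having at least $r$ neighbours in $\{u_1,\dots,u_{j-1}\}$; on a failure at some $j$, discard $u_1,\dots,u_{j-1}$, remove them from $V_0$, and pass to the next iteration. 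Exactly as for $r=2$, within an iteration only edges incident to the current $\{u_1,\dots,u_{j-1}\}$ are exposed, so the blocks retain full randomness, the iterations are independent, and $|V_0|\ge n/2$ throughout. A fixed vertex of the $(j-r)$-th block has at least $r$ neighbours among $j-1$ specified vertices with probability $\ge \frac12\binom{j-1}{r}p_1^r\ge \frac{(j-r)^r p_1^r}{2\,r!}$ (using $(1-p_1)^{j-1-r}\ge 1/2$ since $k\ll 1/p_1$); and because $\frac{n}{2k}\binom{k}{r}p_1^r\le \frac{c_1^{r-1}}{2\,r!}\le 1$ for $c_1$ small, the ``$\Bin(N,q)$ with $Nq\le 1$'' version of Lemma~\ref{lem:estimate} gives that the $j$-th search succeeds with probability $\ge c_r'\,n(j-r)^r p_1^r/k$. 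Multiplying over $j=r+1,\dots,k$ and simplifying via Stirling (as in the $r=2$ computation, the factor $\prod_j (j-r)^r$ offsets the $k^{-k}$ coming from the $1/k$ in each factor), one iteration succeeds with probability at least
\[
\left(\frac{n\,p_1^r\,k^{r-1}}{c_r}\right)^{k}=\left(\frac{c_1^{r-1}}{c_r}\right)^{c_1\log n}=n^{-\delta},
\]
for a constant $c_r=c_r(r)$ and an exponent $\delta=\delta(c_1,r)$ with $\delta\to 0$ as $c_1\to 0$; in particular $\delta<1$ for $c_1$ small, so this probability is $\gg 2k\log n/n$, and since the $n/(2k)$ iterations are independent, w.h.p.\ at least one succeeds, producing the desired $U$.

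For the second round, condition on $G_1$ and fix such a set $U$ with $|U|=c_1\log n$ (it exists w.h.p.); since $G_2\sim G(n,p_2)$ is independent of $G_1$ and $G(n,p_2)$ is vertex-transitive, Theorem~\ref{thm:Janson} applies to the seed set $U$ in $G_2$. Here $n^{-1}\ll p_2\ll n^{-1/r}$, and $a_c=(1-\frac1r)((r-1)!)^{1/(r-1)}\log n\,/\,C_1^{r/(r-1)}$, so once $C_1$ is chosen large enough relative to the already-fixed $c_1$ we get $|U|=c_1\log n\ge (1+\delta)a_c$. Then Theorem~\ref{thm:Janson} leaves at most $O(n(p_2n)^{r-1}e^{-p_2n})$ vertices uninfected; since $p_2 n=C_1 n^{(r-1)/r}(\log n)^{-(r-1)/r}\to\infty$ polynomially, this quantity is $o(1)$, so w.h.p.\ every vertex of $G_2$, hence of $G$, is infected. (If preferred one can stop at $n-o(n)$ infected vertices and finish via Lemma~\ref{lemma:partial_infecr} plus the obvious argument, but the direct bound suffices here.) This exhibits a contagious set of size $r$ in $G$ w.h.p., and since always $m(G,r)\ge r$, we conclude $m(G,r)=r$.

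The main obstacle is the bookkeeping in the first round: one must carry the $r$-fold product through the Stirling estimate carefully enough to confirm that the per-iteration success probability is genuinely $n^{-o(1)}$ as $c_1\to 0$ (so it beats $\mathrm{polylog}(n)/n$ and the $n/(2k)$ independent trials amplify it to $1-o(1)$), and one must check that the constraints on the constants are compatible --- $c_1$ must be small (so that $\frac{n}{2k}\binom{k}{r}p_1^r\le 1$ and each iteration succeeds with probability $\gg \log^2 n/n$), while $C_1$ must be large (so that $c_1\log n\ge(1+\delta)a_c$, and $C\ge 1+C_1$) --- which is achieved by fixing $c_1$ first and then $C_1$. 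The minor point of applying Theorem~\ref{thm:Janson} to the random set $U$ is handled, as above, by conditioning on $G_1$ and invoking vertex-transitivity of $G(n,p_2)$.
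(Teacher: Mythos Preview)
Your proposal is correct and follows essentially the same approach as the paper: a two-round exposure $G=G_1\cup G_2$ with $p_1=(n\log^{r-1}n)^{-1/r}$ and $p_2=C_1(n\log^{r-1}n)^{-1/r}$, the direct generalization of the iterative procedure of Lemma~\ref{thm:intermdiate} to threshold $r$ (same block partition, same per-step lower bound $\binom{j-1}{r}p_1^r/2$, same Stirling simplification yielding per-iteration success probability $n^{-o(1)}$ as $c_1\to 0$), and then Theorem~\ref{thm:Janson} applied in $G_2$ to the resulting seed of size $c_1\log n$. Your write-up is in fact slightly more explicit than the paper's on the monotonicity reduction to $G_1\cup G_2$, on the compatibility of the constants (fix $c_1$ first, then $C_1$), and on why the residual $O(n(p_2n)^{r-1}e^{-p_2n})$ is $o(1)$; these are harmless refinements of the same argument.
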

\begin{proof}
It suffices to prove that for $p_1=(n\log^{r-1} n)^{-1/r}$, a random graph $G_1\sim G(n,p_1)$ is typically such that activating appropriately chosen $r$ vertices will infect $c_1 \log n$ vertices. Thereafter, exposing the remaining edges of $G=G(n,p)$ with probability $p_2=C_2(n\log^{r-1} n)^{-1/r}$, where $C_2$ is a large enough constant, and using Theorem~\ref{thm:Janson} implies that that the whole of $G$ gets infected with high probability.

We use ideas similar to those appearing in Lemma~\ref{thm:intermdiate}. Let $k=c_1\log n$
where $0<c_1<1$ is constant that will be determined later. Run the same iterative procedure as in Lemma~\ref{thm:intermdiate}, but choose initially a set $I$ of $r$ vertices from $V_0$. Now partition $V_0\setminus I$ to $k-r$ sets each of size at least $\lfloor\frac{|V_0|-r}{k-r}\rfloor$. We now run an iterative procedure identical to the one in Lemma~\ref{thm:intermdiate}, but search for a vertex in $v_j\in U_{i,j-r}$ having at least $r$ neighbors in $\{u_1,u_2,\ldots,u_{j-1}\}$ (in the procedure in Lemma~\ref{thm:intermdiate}, $r=2$). If found, set $u_j:=v_j$.
If such a $u_j$ is not found, we stop iteration $j$, delete $\{u_1,\ldots,u_{j-1}\}$ and update $V_0:=V_0-\{u_1,\ldots,u_{j-1}\}$.
The probability that the $j$th iteration succeeds can be lower bounded by $\binom{j-1}{r}\frac{p_1^r}{2}\geq \left(\frac{(j-r)p_1}{r}\right)^r/2$ (recall that $r$ is a fixed constant and $p=o(1)$). Assuming $c$ to be a sufficiently small constant that may depend on $r$, we reason that $\Pr[\Bin(\frac{n}{2(k-r)},\left(\frac{(j-r)p_1}{r}\right)^r/2)>0]\geq \frac{n}{2(k-r)} \cdot \left(\frac{(j-r)p_1}{r}\right)^r/2)$.
Therefore, the probability the $i$th iteration succeeds is at least
$$
\prod_{j=r+1}^k \frac{np_1^r}{2r^r k}\cdot (j-r)^r=\left(\frac{np_1^r}{2r^rk}\right)^{k-r}\cdot \left((k-r)!\right)^r
\ge \left(\frac{np_1^r(k-r)^r}{2er^rk}\right)^{k-r}\,.
$$
Choosing $c_1$ to be small enough and plugging in $k$ and $p$, the aforementioned probability can be lower bounded by
$(c_1/10)^{c_1\log n}>n^{-1/3}$. The rest of the argument is essentially identical to that of Lemma~\ref{thm:intermdiate}.
\end{proof}

Similar arguments to those presented in Section \ref{sec:thresholdtwo} imply that the number of generations for a contagious set as above to activate $G$ is $\Theta(r \log \log n)$. We omit the details.

\section{Conclusions}\label{sec:concl}

The discussion below concerns the case $r=2$.

Theorems~\ref{thm:random} and~\ref{thm:threshold} both show that the smallest contagious set has size $\Theta\left(\frac{n}{d^2\log d}\right)$ w.h.p., but address two different ranges of degrees. The negative results (nonexistence of small contagious sets) in both theorems are based essentially on the same argument (lower bounds on the size of the smallest subgraph of average degree $4 - O(\frac{1}{\log d})$). However, our proofs of the upper bounds in the two theorems are based on different principles. Our proof for Theorem~\ref{thm:random} is based on an algorithm that performs $\log\log d$ iterations, where in every iteration additional vertices are designated as seeds. Such a proof produces a contagious set of size at least $\Omega(\log\log d)$ (in fact, our proof of Proposition~\ref{prop:second} requires values of $d$ for which the contagious set is even larger), and hence is inappropriate for Theorem~\ref{thm:threshold}, in which the total number of seeds allowed is only~2. In contrast, the proof of Theorem~\ref{thm:threshold} is based on examining disjoint pairs of vertices until some pair is found to be contagious. Disjointness implies that the proof examines much fewer than $n$ candidate sets for being contagious. Such a proof is inappropriate for Theorem~\ref{thm:random}, because for the range of degrees considered in Theorem~\ref{thm:random} a random set of size $\Theta\left(\frac{n}{d^2\log d}\right)$ has probability much less than $1/n$ of being contagious.

In this work we did not handle two ranges of degrees. One is when $d$ is a large constant. It is not difficult to extend Theorem~\ref{thm:random} also to the case of large constant degrees. This range of degrees is omitted from the current work mainly for the reasons of simplicity, as we are using Theorem~\ref{thm:Janson} as a blackbox, and that Theorem requires $d$ to be super-constant.  The more challenging range of parameters omitted from our paper is when $d = o\left(\sqrt{\frac{n}{\log n}}\right)$ but still too large for Theorem~\ref{thm:random} to apply. It would be interesting to prove that the smallest contagious set has typically size $\Theta\left(\frac{n}{d^2\log d}\right)$ also in this regime.


The positive results in Theorem~\ref{thm:threshold} implicitly establish one specific average degree $d = \Theta\left(\sqrt{\frac{n}{\log n}}\right)$ that suffices with high probability for two related problems: one is the existence of a contagious set of size~2, and the other is the existence of such a set for which the number of generations is $O(\log\log n)$ (which is best possible up to constant multiplicative factors). For both problems, this value of $d$ is best possible up to constant factors. Nevertheless, it would be interesting to determine whether there is some $d' < d$ for which with high probability there is a contagious set of size $2$, but every contagious set of size $2$ requires more than $O(\log\log n)$ generations.

Theorem~\ref{thm:random} does not explicitly address the number of generations. An upper bound on the number of generations implicit in our proof of Theorem~\ref{thm:random} is $O(\log d \log\log d)$, but we doubt that it is tight.

\end{document}